\newtheorem{theorem}{Theorem}
\theoremstyle{plain}
\newtheorem{corollary}{Corollary}
\newtheorem{lemma}{Lemma}
\numberwithin{equation}{section}
\newcommand{\bcd}{\[\begin{tikzcd}}
\newcommand{\ecd}{\end{tikzcd}\]}
\newcommand{\Z}{{\mathbb{Z}}}
\newcommand{\xnk}{X_n(k)}
\newcommand{\xrk}{X_r^k}
\newcommand{\prk}{P_r^k}
\newcommand{\xsk}{X_s^k}
\begin{document}
\title[Spectrum of Rooted Homogeneous Trees]{On the Spectrum of Finite, Rooted Homogeneous Trees}
\author{Daryl DeFord} 
\address{CSAIL\\
Massachusetts Institute of Technology}
\email{ddeford@mit.edu}

\author{Daniel Rockmore} 
\address{Departments of Mathematics and Computer Science\\
Dartmouth College}
\email{rockmore@math.dartmouth.edu}

\date{\today}

\begin{abstract}
In this paper we study the adjacency spectrum of  families of finite rooted trees with regular branching properties. In particular, we show that in the case of constant branching, the eigenvalues are realized as the roots of a family of generalized Fibonacci polynomials and produce a limiting distribution for the eigenvalues as the tree depth goes to infinity. We indicate how these results can be extended to periodic branching patterns and also provide a generalization to higher order simplicial complexes. 

\end{abstract}

\keywords{Graph Theory; Regular Trees; Graph Spectra; Singular Distributions\\
\textit{MSC subject codes.} 05C05, 05C50}
\maketitle

\section{Introduction} For  integers $r,k>1$,  let $\xrk$ denote the {\em $k$-ary rooted tree of depth $r$}. That is, $\xrk$ is first of all a tree (in the graph-theoretic sense -- so a connected graph without cycles) with a distinguished node, the ``root" that orients all the nodes in $\xrk$ in the sense that any node is located at some {\em depth} defined as the distance of that node from the root.  As usual, the {\em leaves} of $\xrk$ are the nodes most distant from the root and have degree $1$. The {\em tree is of depth $r$}, if the leaves are at distance $r$ from the root. The rooted tree $\xrk$ is $k$-ary in the sense that every non-leaf node has $k$ {\em children} defined as the nodes connected to it that are at depth one more than the node itself. Thus, an interior node is also a {\em parent} of $k$ children. A node at distance $d$ from the root is of {\em generation $d$}. Note that in $\xrk$ the root has degree $k$, interior nodes have degree $k+1$ and leaves have degree $1$. The well-known {\em binary tree} of depth $r$ is $X_r^2$. 

This kind of regular branching -- constant number of children at every non-leaf node -- has natural extensions. For example, one can consider more general periodic branchings: given a vector $\bar{k} = (k_0,k_1,\dots, k_{q-1})$, define the rooted tree $X_r^{\bar{k}}$ to be the tree that has successions of branchings with $k_0$ children of the root, $k_1$ children for the nodes at level $1$ and in general,  a node of generation $aq+b$ where $0 \leq a \leq r$  and $0\leq b < q$  has $k_b$ children. 

In this paper we consider properties of the spectrum of the trees $X_r^{\bar{k}}$, focusing mainly on the simple case of $\bar{k} = (k_0)$, but where possible, extending to vectors of longer length and finite subtrees of well--known infinite trees. By ``spectrum" we mean the spectrum of the adjacency operator of the tree (graph). These trees can be viewed as simplicial complexes of dimension $1$ with some structural regularities. This view admits at least one natural two-dimensional generalization (with corresponding adjacency operator) and we also include some preliminary investigations of this higher order context in Section 5.2.

\subsection{Related Work}

In the literature, the trees studied here are frequently referred to as {\em Bethe Trees} and there has been an active body of research centered around characterizing the spectral properties of operators defined on these trees 
[1, 12-19].   Many of the techniques in these papers make use of algebraic factorizations of the characteristic polynomial. These factorizations then reduce to a collection of tridiagonal eigenvalue computations that can be evaluated as a determinant using a product formula. In this paper we take a combinatorial approach, giving explicit constructions of the eigenvalues and eigenvectors. This allows us to determine the spectral properties of these operators in terms of roots of recurrent families of polynomials and prove limiting results about the corresponding spectral measures. In some cases, our results mirror those from the algebraic perspective, for example Theorem 7 in \cite{R2} is equivalent to Lemma 1 combined with Theorem 3 in this paper and Theorems 9 and 10 of \cite{R7} are similar to our derivations in Section 4.  

Homogeneous trees  with constant branching have found application in a wide variety of fields. In particular, finite versions of these trees where all non-leaf vertices have the same number of edges have found frequent application in computer science. The infinite case is also well studied. The vector-valued form has been studied extensively in the context of dynamical systems -- wherein their consideration arises  naturally in the context of the dynamics of infinitely renormalizable maps on the unit interval as well as periodic orbit structures for maps on the unit interval \cite{BORT}. The $(p+1)$-regular infinite tree is a homogeneous space for the $p$-adic Lie group $SL_2(\Z_p)$ wherein the associated Hecke operator takes the place of the adjacency matrix and is well-studied for various number theoretic connections \cite{cartier}.

Our results are also related to many recent results on the expansion properties of graphs. See \cite{HLW} for a survey and examples of how methods from trees can be extended to arbitrary regular graphs. These methods are mostly concerned with deriving bounds on the spectral gap for regular graphs rather than computing the entire spectrum explicitly. Additionally, there has been recent work on similar problems for higher order simplices \cite{PR}.

\begin{figure}[!h]
\subfloat[$X_4^2$]{\includegraphics[height=2in]{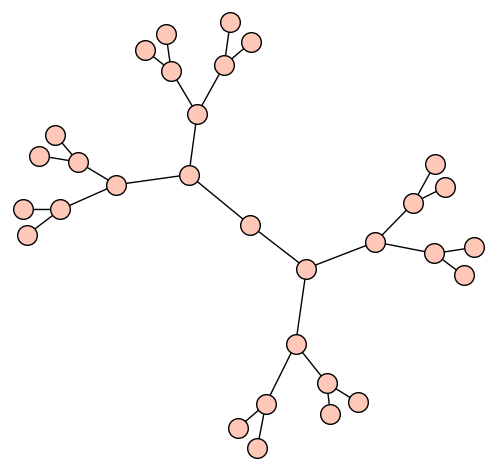}}\qquad\qquad
\subfloat[$\widehat{X_2^4}$]{\includegraphics[height=2in]{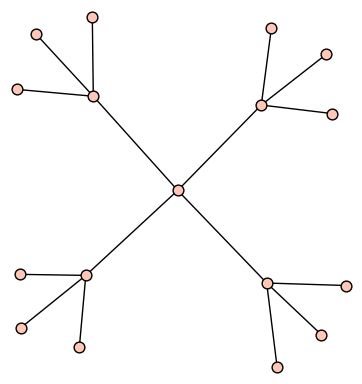}}
\caption{The two principal types of rooted homogeneous trees considered in this paper. Note that $X_r^k$ each non--leaf has $k$ children while in the subtrees of the infinite regular tree $\widehat{X_r^k}$ each non--leaf has degree $k$.}
\label{fig:simple_trees}
\end{figure}

\subsection{Main Results}

Our primary results are for the rooted trees $\xrk$ For $r<s$ the rooted tree $\xrk$ naturally embeds into any rooted tree $\xsk$ by identifying the roots. We will see that the spectra of these trees also have a natural nesting and moreover, use the nested structure to relate the associated eigenvectors as well and  characterize the eigenvalues as roots of families of polynomials derived from the recursive structure of the trees.

To be more precise, for $k\geq 2$, define polynomials $P^{k}_n(x)$  via the recurrence 
\begin{equation}
P^{k}_n(x)= x P^{k}_{n-1}(x)-kP^{k}_{n-2}(x) \label{eq:pndef1}
\end{equation}
with initial conditions 
\begin{equation}
\left\{\begin{array}{l}
P^{k}_0(x) =0,\\
P^{k}_1(x) =1. \\
\end{array}\right.\label{eq:pndef2}
\end{equation}

Thus, for $n>0$, $P^{k}_n(x)$ is of degree $n-1$.

\begin{theorem} Let $r \geq 1$ and $k\geq 2$. The the roots of $P^{k}_2(x),\dots,P^{k}_{r+2}(x)$   are precisely the  eigenvalues of the  $k$-ary tree $\xrk$. \end{theorem}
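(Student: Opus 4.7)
My plan is to construct an explicit basis of $\lambda$-eigenvectors for the adjacency operator on $\xrk$ by exploiting the $S_k$-symmetry at each interior vertex. For every $d \in \{0, 1, \dots, r\}$, I will produce a family of eigenvectors that vanish identically on generations $0, \dots, d-1$ and, restricted to each subtree rooted at a generation-$d$ vertex, depend only on the depth within that subtree. I will show these force $\lambda$ to be a root of $P^k_{r-d+2}$; this gives the inclusion $\{\text{eigenvalues}\} \supseteq \{\text{roots}\}$. A dimension count will then show the constructed eigenvectors exhaust $\mathbb{R}^{V(\xrk)}$, giving the reverse inclusion.

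For the radial case $d=0$, let $g_j$ denote the value of the eigenvector at depth $j$. The adjacency equations read $kg_1 = \lambda g_0$ at the root, $g_{j-1} + kg_{j+1} = \lambda g_j$ at interior depths, and $g_{r-1} = \lambda g_r$ at the leaves. Setting $g_j := P^k_{j+1}(\lambda)/k^j$ translates the recurrence \eqref{eq:pndef1}--\eqref{eq:pndef2} exactly into the interior equation together with the boundary values $g_{-1}=0$, $g_0=1$, and a short computation rearranges the leaf condition into $P^k_{r+2}(\lambda)=0$. Thus each root of $P^k_{r+2}$ produces a one-dimensional radial eigenspace.

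For $d \ge 1$, I attach to each vertex $w$ at generation $d-1$ a tuple $(a_1^w, \dots, a_k^w) \in \mathbb{R}^k$ with $\sum_i a_i^w = 0$ — a $(k-1)$-dimensional choice — and define $f$ on the subtree rooted at the $i$th child of $w$ to be $a_i^w \cdot g_j$ at depth $j$, with $f \equiv 0$ on generations $< d$. The eigenvalue equation is automatic above generation $d$, collapses at $w$ to the zero-sum condition, reproduces the interior recurrence inside each subtree, and at the leaves reduces to $P^k_{r-d+2}(\lambda)=0$ exactly as in the radial case applied to a subtree of depth $r-d$. Each such $\lambda$ therefore yields an eigenspace of dimension at least $(k-1)k^{d-1}$, coming from the antisymmetric tuples at the $k^{d-1}$ vertices $w$.

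Linear independence follows from a ``first nonzero generation'' argument: a nonzero eigenvector produced at parameter $d$ vanishes on generations $<d$ but not on generation $d$ (since $g_0=1\ne 0$), so contributions from different $d$'s cannot cancel; orthogonality of eigenspaces handles distinct $\lambda$. A direct telescoping yields
\[
(r+1) + \sum_{d=1}^{r} (r-d+1)(k-1)k^{d-1} = \frac{k^{r+1}-1}{k-1} = |V(\xrk)|,
\]
so the independent eigenvectors exhaust $\mathbb{R}^{V(\xrk)}$, forcing the spectrum to equal the union of the root sets of $P^k_2, \dots, P^k_{r+2}$. The main subtlety I anticipate is this independence step, since a given $\lambda$ may be a common root of several $P^k_n$; it is precisely the ``first nonzero generation'' observation that cleanly disentangles the contributions coming from different values of $d$.
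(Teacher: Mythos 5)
Your proposal is correct and follows essentially the same strategy as the paper: radial (isotropic) eigenvectors for the roots of $P^k_{r+2}$, then for each later starting generation a family of eigenvectors supported on subtrees and parametrized by zero-sum tuples, closed out by a dimension count. One substantive remark: your multiplicity $(k-1)k^{d-1}$ for each root of $P^k_{r-d+2}$, i.e.\ $(k-1)k^{r-s+1}$ for each root of $P^k_s$, is the correct value (e.g.\ it gives the full $3$-dimensional kernel of $A(X_2^2)$), whereas the paper's Lemma 4 states $(k-1)k^{r-s}$ --- an off-by-one in the exponent, traceable to counting $k^{r-s}$ rather than $k^{r-s+1}$ nodes at distance $r-s+1$ from the root, which the paper's final tally silently compensates for by treating $P^k_s$ as having $s$ rather than $s-1$ roots. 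Your explicit ``first nonzero generation'' argument for linear independence also supplies a step the paper leaves implicit.
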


\begin{corollary} If $r \leq r'$ then the spectrum of $\xrk$ is a subset of the spectrum of $X_{r'}^k$. 
\end{corollary}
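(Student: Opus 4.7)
The corollary is essentially immediate from Theorem 1, so the ``plan'' is really just to unpack the set-theoretic content of that theorem; there is no genuine obstacle.

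My approach would be to invoke Theorem 1 twice and compare the two descriptions of the spectra. Specifically, by Theorem 1 the spectrum of $X_r^k$ is exactly
\[
\mathrm{Spec}(X_r^k) \;=\; \bigcup_{n=2}^{r+2}\{\,x \,:\, P_n^k(x)=0\,\},
\]
and likewise $\mathrm{Spec}(X_{r'}^k)$ is the union of the root sets of $P_2^k,\dots,P_{r'+2}^k$. Since $r \leq r'$ implies $r+2 \leq r'+2$, the index set $\{2,\dots,r+2\}$ is contained in $\{2,\dots,r'+2\}$, so the first union is contained in the second. That gives $\mathrm{Spec}(X_r^k)\subseteq \mathrm{Spec}(X_{r'}^k)$.

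The only ``content'' to verify is that no root of any $P_n^k$ with $n \leq r+2$ gets lost, i.e.\ that Theorem 1 really characterizes the spectrum as a set (with or without multiplicity) and not merely as the root set of a single polynomial. The statement of Theorem 1 phrases things in terms of a union of root sets across $n=2,\dots,r+2$, so this is built in. Consequently there is no separate lemma to prove, and the corollary reduces to the monotonicity of nested unions over an increasing index range.

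If one wanted to phrase the argument slightly more conceptually, one could also observe that $X_r^k$ embeds in $X_{r'}^k$ by identifying roots, and the nested structure of the spectra reflects this inclusion; but for the purposes of a proof, the one-line set-containment argument above suffices.
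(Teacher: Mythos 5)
Your argument is correct and matches the paper's intent exactly: the corollary is stated immediately after Theorem~1 with no separate proof, precisely because it follows from the nesting of the index sets $\{2,\dots,r+2\}\subseteq\{2,\dots,r'+2\}$ as you describe. Nothing further is needed.
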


\begin{theorem} If $\lambda$ is a root $P^{k}_r$ and not a root of $P^{k}_m$ for any $m<r$ then asymptotically (as $r\longrightarrow \infty$), the proportion of eigenvalues of $\xrk$ equal to $\lambda$ is $\dfrac{(k-1)^2}{k^r-1}$.
\end{theorem}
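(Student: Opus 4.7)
The plan is to compute the exact multiplicity $m_r(\lambda) := \dim\ker(A(\xrk)-\lambda I)$ and take its ratio with the vertex count $N_r := |V(\xrk)| = (k^{r+1}-1)/(k-1)$ as the depth $r\to\infty$. To separate the two uses of $r$ in the statement I write $n$ for the minimal index with $P^k_n(\lambda)=0$; the denominator $k^r-1$ in the claim is then read as $k^n-1$, and the parameter tending to infinity is the depth.

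The first step is to obtain a recurrence for $m_r(\lambda)$ from the branching structure. Splitting the eigenspace according to the value at the root, any $\lambda$-eigenvector $f$ with $f(\text{root})=0$ restricts to an eigenvector on each of the $k$ subtrees $X_{r-1}^k$ hanging below the root, subject only to $\sum_{j=1}^{k} f(v_j)=0$ at the children (which is the eigenvalue equation at the root). Letting $s_r(\lambda)\in\{0,1\}$ indicate the existence of a $\lambda$-eigenvector with nonzero root value, a dimension count then yields
\[
m_r(\lambda) \;=\; k\, m_{r-1}(\lambda) \;-\; s_{r-1}(\lambda) \;+\; s_r(\lambda),
\]
with base case $m_0(\lambda)=s_0(\lambda)=[\lambda=0]$.

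The heart of the argument is pinning down $s_r(\lambda)$. Since an eigenvector with nonzero root value can be averaged over the sibling-permuting automorphism group to produce a generation-constant one (the root value is preserved), $s_r(\lambda)=1$ is equivalent to the existence of a nonzero sequence $(a_0,\dots,a_r)$ with $ka_1=\lambda a_0$, $a_{\ell-1}+ka_{\ell+1}=\lambda a_\ell$ for $0<\ell<r$, and $a_{r-1}=\lambda a_r$. I would verify that the candidate $a_\ell := P^k_{r-\ell+1}(\lambda)$ satisfies the interior recurrence by the definition of $P^k_n$, the leaf condition because $P^k_2(x)=x$, and, via the defining recurrence for $P^k_{r+2}$, the root condition precisely when $P^k_{r+2}(\lambda)=0$. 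A short descent through the recurrence rules out common roots of $P^k_{r+1}$ and $P^k_{r+2}$ (they would force $P^k_1=1$ to vanish), so whenever $P^k_{r+2}(\lambda)=0$ the root value $a_0=P^k_{r+1}(\lambda)$ is nonzero. Consequently $s_r(\lambda)=[P^k_{r+2}(\lambda)=0]$, which by minimality of $n$ equals $1$ exactly when $n\mid r+2$.

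Finally, I would unroll the recurrence to the closed form
\[
m_r(\lambda) \;=\; s_r(\lambda) + (k-1)\sum_{i=0}^{r-1} k^{\,r-1-i}\, s_i(\lambda),
\]
and observe that the surviving indices are $i=nj-2$ for $j=1,\dots,\lfloor(r+1)/n\rfloor$, so the sum is a geometric progression in $k^{-n}$ with leading term $k^{r+1-n}$. Hence $m_r(\lambda)\sim (k-1)k^{r+1}/(k^n-1)$, and dividing by $N_r\sim k^{r+1}/(k-1)$ yields the claimed limit $(k-1)^2/(k^n-1)$. I expect the main obstacle to lie in the symmetric-eigenvector step: identifying the generation-constant eigenvector with the explicit sequence $\bigl(P^k_{r-\ell+1}(\lambda)\bigr)_\ell$ is what couples the spectral recursion on $\xrk$ directly to the polynomial family and turns the boundary condition into the clean criterion $P^k_{r+2}(\lambda)=0$; the remaining work is routine recurrence-unrolling and a geometric-sum asymptotic.
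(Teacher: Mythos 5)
Your proposal is correct, and it reaches the theorem by a genuinely different route than the paper. The paper (Lemmas 2--4 and Theorem 3, feeding into its multiplicity lemma) explicitly constructs, for each index $s$ with $P^k_s(\lambda)=0$, a family of eigenvectors of ``type $(\lambda,s)$'' supported on subtrees hanging at a fixed depth, argues these exhaust the spectrum by a total dimension count, and then sums the resulting dimensions over the multiples of the minimal index using the divisibility of the $P$-sequence to obtain the same geometric series $\sum_j (k-1)k^{\,r+1-nj}$. You never build the full eigenbasis: your identity $m_r(\lambda)=k\,m_{r-1}(\lambda)-s_{r-1}(\lambda)+s_r(\lambda)$ is an exact dimension count coming from the root decomposition of the eigenspace, and the only explicit construction you need is the single isotropic profile $a_\ell=P^k_{r-\ell+1}(\lambda)$ that pins down the indicator $s_r(\lambda)=[P^k_{r+2}(\lambda)=0]$ (your averaging-over-automorphisms step and the coprimality of consecutive $P$'s make this an equivalence, not just a sufficient condition). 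What your route buys is that completeness is automatic --- the recurrence re-proves Theorem 1 as a byproduct, so no separate ``these are all the eigenvalues'' argument is required --- and it yields the exact multiplicity cleanly; what the paper's route buys is the explicit eigenvectors themselves, which it reuses later (e.g.\ for the Hecke eigenvector construction in Section 3). One further point in your favor: unrolling your recurrence gives the per-index contribution $(k-1)k^{\,r+1-s}$, which agrees with the formula the paper actually sums in its multiplicity lemma but differs by a factor of $k$ from the dimension $k^{r-s}(k-1)$ stated in its Lemma 4 (there are $k^{\,r-s+1}$, not $k^{\,r-s}$, nodes at distance $r-s+1$ from the root); a check on $X_2^2$ with $\lambda=0$, where the multiplicity is $3$, confirms your count.
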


Together, these facts allow us to show that the sequence of spectral measures convergence to a singular distribution with well-defined endpoints.\\
{\bf Theorem 4. }
\textit{
The endpoints of the support of $\lim_{n\rightarrow\infty} \mathcal{U}(NSpec(A(X_n^k)))$ are: }

$$\left\{\varepsilon \dfrac{(k-1)^2}{k^m-1} + \sum_{n=1}^\infty \dfrac{(k-1)^2}{k^n-1}\sum_{\substack{(\ell,n)=1\\ \ell<\frac{an}{m}}} 1 : \begin{array}{c} m\in \mathbb{N} \\ (a,m)=1 \\\varepsilon\in\{0,1\}\end{array} \right\}.$$

\section{Spectrum of $\xrk$}

\subsection{Preliminaries.} 

Let $L^2(X_r^k)$ denote the $|\xrk |$-dimensional vector space of complex-valued functions on $\xrk$.  Counting the nodes by depth gives that
\begin{equation}
|\xrk | = \dfrac{{k^{r+1} - 1}}{{k-1}}.
\end{equation}

The {\em standard basis} for $L^2(\xrk)$ is the set of functions $e_\gamma$ indexed by the nodes $\{  \gamma\}$ of $\xrk$ that are equal to $1$ at the given node $\gamma$ and $0$ elsewhere. Thus any $v\in L^2(\xrk)$ can be written
\begin{equation}
v = \sum_{\gamma \in \xrk} v_\gamma e_\gamma
\label{eq:stdbasis}
\end{equation}
so that $v$ is thus identified with the values that it takes on the nodes of $\xrk$. This is the natural representation of $v$ for considering the effect of the adjacency operator
\begin{equation}
(Av)_\gamma = \sum_{\beta \sim \gamma} v_\beta
\end{equation}
where $\beta \sim \gamma$ denotes that $\beta$ is adjacent (linked) to $\gamma$. 

We call $v\in L^2(\xrk)$ {\em  isotropic} (with respect to the root) if $v_\gamma$ is constant over all $\gamma$  at a given distance from the root. In that case, we let $v_m$ denote the common value that $v$ takes for $\gamma$ at distance $m$ from the root. It is easy to see that $A$ takes isotropic functions to isotropic functions. 

The spectrum of $A$ will be characterized by the zeros of the polynomials $P_n^k(x)$. These are exactly the generalized Fibonacci polynomials of type $U(x,-(k-1))$ defined in \cite{HL}.  These polynomials were introduced in \cite{WP} and many of their general properties were reported in  \cite{FHM, HL}. In Lemma \ref{lem:fibpoly} we record some of the properties that we will use throughout the paper.

\begin{lemma}
\label{lem:fibpoly}
The polynomials $P_n^k(x)$ have the following properties:
\begin{enumerate}
\item If $\lambda$ is a root of $P_m^k(x)$ and $(m,n)=1$ then $\lambda$ is not a root of $P_n^k(x)$.
\item The polynomials $P_n^k(x)$ form a divisibility sequence: for $m|n$, then $P_m^k(x) | P_n^k(x)$.
\item The roots of $P_m^k(x)$ are exactly $\left\{2\sqrt{k}\cos(\dfrac{\ell\pi}{m}): 1\leq \ell\leq m\right\}$. 
\end{enumerate}
\end{lemma}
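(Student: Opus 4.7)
The plan is to solve the linear recurrence (\ref{eq:pndef1})--(\ref{eq:pndef2}) in closed form; this will reduce each of the three statements to an elementary fact about the sine function.

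First I would diagonalize the recurrence via the characteristic equation $t^2 - xt + k = 0$. Substituting $x = 2\sqrt{k}\cos\theta$, the characteristic roots become $\sqrt{k}e^{\pm i\theta}$, and the initial conditions $P_0^k = 0$, $P_1^k = 1$ determine the solution
\begin{equation*}
P_n^k(2\sqrt{k}\cos\theta) \;=\; k^{(n-1)/2}\,\frac{\sin(n\theta)}{\sin\theta}.
\end{equation*}
This identifies $P_n^k$ with a rescaled Chebyshev polynomial of the second kind, $P_n^k(x) = k^{(n-1)/2} U_{n-1}\!\left(x/(2\sqrt{k})\right)$. To turn this into an identity of polynomials (rather than a pointwise identity on $|x|\le 2\sqrt{k}$), I would note that $\sin(n\theta)/\sin\theta = U_{n-1}(\cos\theta)$ is a genuine polynomial of degree $n-1$ in $\cos\theta$, so the right-hand side is a polynomial in $x$ of degree $n-1$ satisfying the same recurrence and initial conditions as $P_n^k$; the two therefore coincide by uniqueness.

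Statement (3) then follows by reading off the zeros of the sine quotient: these occur exactly when $\theta = \ell\pi/n$ for $1 \leq \ell \leq n-1$, and since $\cos$ is injective on $(0,\pi)$ these yield $n-1$ distinct real roots $2\sqrt{k}\cos(\ell\pi/n)$, exhausting the degree of $P_n^k$. (The range $1 \leq \ell \leq m$ in the statement appears to be an off-by-one typo; the correct range is $1 \leq \ell \leq m-1$.) For (2), if $m \mid n$ then each root $\theta = \ell\pi/m$ of $P_m^k$ satisfies $\sin(n\theta) = \sin((n/m)\ell\pi) = 0$, so the same value $2\sqrt{k}\cos\theta$ is a root of $P_n^k$; simplicity of the roots of $P_m^k$ then gives $P_m^k \mid P_n^k$. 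For (1), a common root of $P_m^k$ and $P_n^k$ would correspond under the parametrization to $\ell/m = \ell'/n$ with $\ell \in \{1,\dots,m-1\}$ and $\ell' \in \{1,\dots,n-1\}$; clearing denominators yields $\ell n = \ell' m$, and $\gcd(m,n) = 1$ then forces $m \mid \ell$, contradicting $\ell < m$.

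The only real subtlety is confirming that the sine-quotient formula is a polynomial identity in $x$ rather than a merely transcendental one, and this is immediate from the Chebyshev observation together with uniqueness of solutions to a second-order linear recurrence. Once this is in place, properties (1)--(3) reduce to routine arithmetic with the explicit parametrization of the root set, so I do not expect a serious obstacle beyond that initial identification.
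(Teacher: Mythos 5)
Your proof is correct, but it takes a genuinely different route from the paper: the paper does not prove the lemma at all, instead citing Theorems 4, 6, and 10 of Hoggatt--Long on generalized Fibonacci polynomials, whereas you derive everything from scratch by solving the recurrence. Your closed form $P_n^k(2\sqrt{k}\cos\theta) = k^{(n-1)/2}\sin(n\theta)/\sin\theta$ is right (the characteristic roots $\sqrt{k}e^{\pm i\theta}$ and the initial conditions check out), the upgrade to a polynomial identity via uniqueness of solutions to the recurrence is sound, and the deductions of (1)--(3) from the parametrization $\theta = \ell\pi/n$ are all routine and valid; in particular you correctly flag that the root set must be indexed by $1 \le \ell \le m-1$ rather than $1 \le \ell \le m$, since $P_m^k$ has degree $m-1$ and $U_{m-1}(-1) = (-1)^{m-1}m \neq 0$. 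What your approach buys is a self-contained, transparent argument in which divisibility and coprimality of the polynomials become elementary arithmetic of the fractions $\ell/m$, and which makes the simplicity of all roots explicit (needed to pass from ``every root of $P_m^k$ is a root of $P_n^k$'' to actual polynomial divisibility). What the citation buys the paper is brevity and generality: the Hoggatt--Long results hold for generalized Fibonacci polynomials $U(x,y)$ with arbitrary second parameter, while your trigonometric substitution exploits the specific form $y = -k < 0$. One incidental benefit of your derivation: it pins down the roots of $P_3^k = x^2 - k$ as $\pm\sqrt{k}$, which quietly corrects the value $\pm\sqrt{k-1}$ used in the base case of the paper's Lemma 5.
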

\begin{proof} These results are Theorems 4, 6, and 10 in \cite{HL}.
\end{proof}

\subsection{Eigenvalues for $\xrk$ and roots of $\prk(x)$.} We first show the ``forward" direction of Theorem 1:  if $\lambda$ is a root of $P^{k}_s(x)$  for $1\leq s\leq r$ then $\lambda$ is an eigenvalue of $\xrk$. We do this by explicitly constructing eigenvectors for these $\lambda$ using the standard basis.

\begin{lemma}
Let $v\in L^2(\xrk)$ be isotropic with $v_m = P_{r-(m-1)}^k(\lambda)$ for $\lambda$ a root of $P^{k}_{r+2}(x)$. Then $v$ is an eigenvector for $\xrk$ with eigenvalue $\lambda$.
\end{lemma}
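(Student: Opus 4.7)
The plan is a direct verification that amounts to matching the three-term recurrence satisfied by the polynomials $P^k_n$ to the adjacency relations at three types of nodes in $X_r^k$: the root, an interior node, and a leaf. Since isotropic vectors are preserved by $A$ and are determined by their depth-values $v_0,\dots,v_r$, it suffices to check $(Av)_m = \lambda v_m$ at each depth $m$, where I abuse notation and write $(Av)_m$ for the common value of $Av$ on depth-$m$ vertices.

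First I would note that $v$ is nonzero: at the leaves, $v_r = P^k_1(\lambda) = 1$, so the claimed eigenvector is genuine. Next I would enumerate the three cases in turn. At the root ($m=0$), which has $k$ children and no parent, we have $(Av)_0 = k v_1 = k P^k_r(\lambda)$, and I want this to equal $\lambda v_0 = \lambda P^k_{r+1}(\lambda)$; this is exactly the instance of the recurrence $\lambda P^k_{r+1}(\lambda) - k P^k_r(\lambda) = P^k_{r+2}(\lambda)$ evaluated at the root $\lambda$ of $P^k_{r+2}$. At an interior depth $1 \leq m \leq r-1$, each vertex has one parent (at depth $m-1$) and $k$ children (at depth $m+1$), so $(Av)_m = v_{m-1} + k v_{m+1} = P^k_{r-m+2}(\lambda) + k P^k_{r-m}(\lambda)$; rearranging the recurrence $P^k_{r-m+2}(x) = x P^k_{r-m+1}(x) - k P^k_{r-m}(x)$ shows that this equals $\lambda P^k_{r-m+1}(\lambda) = \lambda v_m$. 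At the leaves ($m=r$), each vertex has a unique parent, so $(Av)_r = v_{r-1} = P^k_2(\lambda)$, and by the initial conditions $P^k_2(\lambda) = \lambda P^k_1(\lambda) - k P^k_0(\lambda) = \lambda$, matching $\lambda v_r = \lambda$.

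Putting these three verifications together gives $Av = \lambda v$ on every orbit of depths, and since $v$ is isotropic this determines $Av$ on all of $L^2(X_r^k)$. There is no real obstacle here: the only subtlety is the shifted indexing $v_m = P^k_{r-m+1}(\lambda)$, which is chosen precisely so that the boundary case at the leaves uses $P^k_1, P^k_0$ (the initial data) while the boundary case at the root uses $P^k_{r+1}, P^k_{r+2}$ (where the vanishing hypothesis kicks in). Keeping the indices straight and recognizing which recurrence instance is invoked at each depth is the only bookkeeping required.
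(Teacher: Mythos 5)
Your proof is correct and follows essentially the same route as the paper: a direct case-by-case verification of the eigenrelation at the leaves, interior nodes, and root, matching each adjacency sum to the appropriate instance of the recurrence $P^k_n = xP^k_{n-1} - kP^k_{n-2}$ and invoking $P^k_{r+2}(\lambda)=0$ only at the root. Your explicit remark that $v_r = P^k_1(\lambda) = 1 \neq 0$ is a small but worthwhile addition the paper omits.
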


\begin{proof}
Let $v\in L^2(\xrk)$ be defined as in the statement of the lemma. We now confirm the eigenrelation $(Av)_m = \lambda v_m$ for $0\leq m \leq r$. We will make repeated use of the defining properties of the polynomials (see Eqns. (\ref{eq:pndef1}), (\ref{eq:pndef2})). 

\noindent {\bf Case 1:} $m=r$. In this case we are at the leaves. Thus 
$$(Av)_r = v_{r-1} = P_{r-(r-1-1)}^k(\lambda) = P_{2}^k(\lambda) = \lambda P_1^k(\lambda) = \lambda v_r.$$

\medskip

\noindent {\bf Case 2:} $0 < m < r$. In this case we are at a node that is neither a leaf nor the root.  Then
$$
\begin{array}{rcl}
(Av)_m & = & v_{m-1} + (k-1)v_{m+1} \\
& = & P_{r-(m-1-1)}^k(\lambda) + k P_{r-(m+1-1)}^k(\lambda)\\
& = & \lambda P_{r-m+1}^k(\lambda)  =\lambda P_{r-(m-1)}^k(\lambda) \\
& = & \lambda v_m
\end{array}$$
where the second-to-last line uses the defining recurrence of the $\prk(x)$. 

\medskip
\noindent {\bf Case 3:} $m=0$. In this case, we are at the root and 
$$\begin{array}{rcl}
(Av)_0 &=& k v_1\\
& = & kP_{r-(1-1)}^k(\lambda) = kP_{r}^k(\lambda)\\
& = & \lambda P_{r+1}^k(\lambda) - P_{r+2}^k(\lambda)\\
& = & \lambda P_{r-(0-1)}^k(\lambda) = \lambda v_0
\end{array}$$
where the second-to-last line uses the definition of the  $\prk(x)$ and the fact that $\lambda$ is a root of 
$P_{r+2}^k(x)$.

\end{proof}

Note that the only case of the proof that depended on the choice of $\lambda$ as a root of $P_{r+2}^k(x)$ was Case 3. We now show how this eigenvector construction can be extended  to the roots of the other $P_\ell^k(x)$. In short, for any distance $d\geq 0$ from the root, the construction can be modified by constructing vectors that are linear combinations of analogous functions, isotropic with respect to rooted {\em subtrees} issuing from a node at distance $d$. 
 We'll do it in pieces. 

\begin{lemma}
If $\lambda$ is a root of $P^{k}_{r+1}(x)$ then $\lambda$ is an eigenvalue of $\xrk$. Moreover, the dimension of the $\lambda$-eigenspace  is $k-1$. 
\end{lemma}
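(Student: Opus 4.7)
The plan is to construct a $(k-1)$-parameter family of eigenvectors for $\lambda$ by breaking the isotropy at the root while keeping the vector isotropic on each subtree hanging from a child of $\rho$, then verify the eigenrelation case by case.

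Let $\rho_1,\dots,\rho_k$ denote the children of the root $\rho$ and let $T_i$ be the subtree rooted at $\rho_i$. For any $(c_1,\dots,c_k)\in\mathbb{C}^k$ satisfying $\sum_{i=1}^k c_i=0$, I would define $v\in L^2(\xrk)$ by setting $v(\rho)=0$ and, for any $\beta$ at depth $m\geq 1$ lying in $T_i$, setting $v(\beta)=c_i\,P^k_{r-m+1}(\lambda)$. On each individual $T_i$ this is the isotropic profile of Lemma 2 scaled by $c_i$, but the scalars differ across children, so $v$ is not globally isotropic.

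The verification of $(Av)_\beta=\lambda v(\beta)$ breaks into four cases indexed by the depth $m$ of $\beta$. For $m=r$ and for $2\leq m\leq r-1$, the calculations are identical to Cases 1 and 2 of Lemma 2 with an overall factor of $c_i$; they rest only on the recurrence defining $P^k_n(x)$ and require no hypothesis on $\lambda$. The two genuinely new cases are $m=1$ and $m=0$. At a depth-$1$ node $\rho_i$ the parent $\rho$ contributes $0$ and each of the $k$ children at depth $2$ contributes $c_i P^k_{r-1}(\lambda)$, so $(Av)_{\rho_i}=kc_i P^k_{r-1}(\lambda)$, which should equal $\lambda v(\rho_i)=\lambda c_i P^k_r(\lambda)$; the difference is $c_i$ times $\lambda P^k_r(\lambda)-kP^k_{r-1}(\lambda)=P^k_{r+1}(\lambda)$, which vanishes by hypothesis. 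At the root, $(Av)_\rho=\sum_{i=1}^k v(\rho_i)=P^k_r(\lambda)\sum_{i=1}^k c_i=0=\lambda v(\rho)$ by the zero-sum constraint on the $c_i$.

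For the dimension claim, the map $(c_1,\dots,c_k)\mapsto v$ is linear on the hyperplane $H=\{(c_i):\sum c_i=0\}\cong\mathbb{C}^{k-1}$ and is injective because $P^k_r(\lambda)\neq 0$: by part (1) of Lemma 1, a root of $P^k_{r+1}$ cannot be a root of $P^k_r$ since $\gcd(r,r+1)=1$. This yields a $(k-1)$-dimensional subspace of the $\lambda$-eigenspace. For the matching upper bound, I would introduce the $A$-invariant filtration $V_0\subset V_1\subset\cdots\subset V_r=L^2(\xrk)$, where $V_d$ is the space of functions isotropic on every subtree hanging at depth $d$. A direct adaptation of the present construction to each filtration layer shows that $\lambda$-eigenvectors in $V_d\ominus V_{d-1}$ exist precisely when $\lambda$ is a root of $P^k_{r-d+2}(x)$ and then contribute dimension $k^{d-1}(k-1)$. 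Under the (implicit) hypothesis that $\lambda$ is a new root of $P^k_{r+1}$, that is, not a root of any $P^k_m$ with $m<r+1$, only the $d=1$ summand contributes, giving total eigenspace dimension exactly $k-1$. The main obstacle I anticipate is this upper bound: the eigenvector construction and its verification are a mechanical adaptation of Lemma 2, but pinning down the entire $\lambda$-eigenspace and ruling out contributions from $V_0$ and the higher filtration pieces requires both the structural decomposition and the coprimality facts from Lemma 1.
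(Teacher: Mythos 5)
Your eigenvector construction is the same as the paper's: the paper likewise chooses scalars $a_1,\dots,a_k$ with $\sum_i a_i=0$, sets the root value to $0$, assigns $a_i\,P^k_{r-(m-1)}(\lambda)$ throughout the subtree under the $i$-th child, and verifies the eigenrelation only at the root and at the root's children (your identity $kP^k_{r-1}(\lambda)=\lambda P^k_r(\lambda)-P^k_{r+1}(\lambda)=\lambda P^k_r(\lambda)$ is exactly the paper's computation), deferring the remaining depths to Lemma 2. Where you genuinely go beyond the paper is the second assertion. The paper's proof only exhibits the $(k-1)$-parameter family and calls that the dimension; it never rules out a larger $\lambda$-eigenspace. (Injectivity, by the way, is immediate because the leaf values are the $c_i$ themselves, so your appeal to $P^k_r(\lambda)\neq0$ via coprimality is correct but unnecessary.) Your filtration by depth of isotropy is a sound route to the upper bound, and your layer counts are the dimensionally consistent ones: $\dim(V_d\ominus V_{d-1})=k^{d-1}(k-1)(r-d+1)$ matches $r-d+1$ roots of $P^k_{r-d+2}$ each of multiplicity $k^{d-1}(k-1)$ (note this corresponds to $k^{r-s+1}(k-1)$ for a root of $P^k_s$, not the $k^{r-s}(k-1)$ stated in the paper's Lemma 4). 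You are also right that the exactness claim needs the hypothesis you flag as implicit: by the divisibility property of Lemma 1, a root $\lambda$ of $P^k_{r+1}$ is also a root of $P^k_m$ for every $m\mid r+1$, and then the type-$(\lambda,m)$ eigenvectors enlarge the eigenspace beyond $k-1$ (for instance $\lambda=0$ whenever $r+1$ is even). Read literally, the lemma's dimension statement holds only for roots appearing for the first time at index $r+1$, which is the reading the paper adopts implicitly when it totals dimensions later; your proposal makes this explicit, which is an improvement rather than a gap.
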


\begin{proof}
As in the previous lemma we construct eigenvectors $v$ directly. We begin by selecting $k$ real numbers $\{a_1,a_2,\ldots,a_k\}\subset\mathbb{C}$  such that $\sum_{i=1}^k a_i=0$ and not all the $\{a_i\}$ are zero.  Note that this is a $k-1$-dimensional subspace.  Order the children of the root arbitrarily from $1$ to $k$ and for each descendant of child $i$ at distance $m$ from the root, set the corresponding eigenvector value to $v_i= a_i P^{k}_{r-(m-1)}(\lambda)$. Finally, set the value at the root to $v_{0} = P_{r+1}^k(\lambda)=0$. 

In this case, the eigenrelation at the root is satisfied since the sum of the values assigned to the children of the root is $\sum_{i=1}^k a_i P^{(k)}_{r}(\lambda)=P^{(k)}_{r}(\lambda)\sum_{i=1}^k a_i=0$.  Note that while $v$ is not isotropic on $\xrk$, it is isotropic on any  {\em subtree rooted at any  child of the root} and extending through all of its descendants. The argument in the previous lemma is easily adapted:  the only change is that each of the relations for the various cases are multiplied by $a_i$ except possibly for the case of the root's children themselves -- the new ``root" of the subtree. For this, consider any given child of the root $i$: 

$$\begin{array}{rcl}
(Av)_{i} &= &v_0 + a_i\cdot k \cdot P^k_{r-(2-1)}(\lambda)\\
&=& 0 + a_i(\lambda P^k_{r}(\lambda) - P^k_{r+1}(\lambda))\\
&=& a_i\lambda P^k_{r}(\lambda) = \lambda v_i
\end{array}$$
where the second line follows from (\ref{eq:pndef1}) and the third line follows from the choice of $\lambda$ and the definition of $v$.

\end{proof}

The construction of Lemma 3 can be generalized: instead of only prescribing the root to be zero, we can now construct eigenfunctions that are zero out to some fixed distance from the root and then mimic the construction. We say an eigenvector $v$, with associated eigenvalue $\lambda$, of $\xrk$ is {\em of type $(\lambda, d)$} if in its standard basis representation it has at least one nonzero coefficient for a node (index) at distance $d-2$ from the leaves and no nonzero entries at distance greater than $d-3$ from the leaves. As an example, the eigenvectors constructed in Lemma 2 are of type $(\lambda, r+1)$ for $\lambda$ a root of $P_{r+1}^{(k)}$.

\begin{lemma}
If $\lambda$ is a root of $P^{k}_s(x)$ for $s<r+1$ then there is a $k^{r-s}(k-1)$-dimensional eigenspace for $\lambda$ consisting of eigenvectors with type $(\lambda, s)$ for $\xrk$. 
\end{lemma}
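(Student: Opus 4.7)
My strategy is to generalize Lemma 3, which handled $s=r+1$ by zeroing the root of $\xrk$ and applying an isotropic, Lemma 2--style assignment on each root-subtree. For $s<r+1$, the plan is to zero out every node down to depth $r-s+1$ and then apply the Lemma 3 recipe locally to each subtree hanging below this zero layer.

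Concretely, fix any node $u$ of $\xrk$ at depth $r-s+1$; its subtree $T_u$ is isomorphic to $X_{s-1}^{k}$. Since $\lambda$ is a root of $P^{k}_{(s-1)+1}=P^{k}_{s}$, Lemma 3 applies verbatim to $T_u$ and yields a $(k-1)$-dimensional family of eigenvectors on $T_u$, each vanishing at $u$ and assigning the value $a_i\, P^{k}_{s-m}(\lambda)$ to a descendant of $u$'s $i$-th child at depth $m$ of $T_u$, with $(a_1,\dots,a_k)$ subject only to $\sum_i a_i=0$. I then extend each such local eigenvector by zero to all of $\xrk$.

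The verification that the extension remains an eigenvector of $A(\xrk)$ breaks into three regions. (i) At nodes strictly inside $T_u$, the eigenvalue equation is the same as the one verified inside the proof of Lemma 3, and so holds unchanged. (ii) At $u$ itself, the new summand $v_{\mathrm{parent}(u)}$ is zero by our extension, so the equation reduces to $P^{k}_{s-1}(\lambda)\sum_i a_i=0$; this is automatic from $\sum a_i=0$, and in any case $P^{k}_{s-1}(\lambda)\neq 0$ by Lemma 1(1) since $\gcd(s-1,s)=1$. (iii) At any node $\gamma$ outside $T_u$, both $v_\gamma$ and every $v_\beta$ with $\beta\sim\gamma$ vanish by construction, so $(Av)_\gamma=0=\lambda v_\gamma$ trivially.

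Each eigenvector produced has its first nonzero coordinate at depth $r-s+2$, i.e., distance $s-2$ from the leaves, confirming it is of type $(\lambda,s)$. Because distinct $u$ at depth $r-s+1$ have vertex-disjoint subtrees $T_u$, the local $(k-1)$-dimensional eigenspaces sum directly over the $k^{r-s+1}$ choices of $u$, yielding the desired eigenspace. The only anticipated subtlety is the indexing bookkeeping in translating between the local Lemma 3 statement on $T_u = X_{s-1}^{k}$ and the global statement on $\xrk$; once the shift between the indices used in Lemma 3 and those naturally indexing depth in $\xrk$ is sorted out, the argument is a direct extension of Lemma 3.
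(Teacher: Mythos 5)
Your construction is the same as the paper's: zero out every node at depth at most $r-s+1$, run the Lemma~3 recipe on each subtree $T_u$ rooted at a node $u$ of depth $r-s+1$ (a copy of $X_{s-1}^k$, so Lemma~3 applies exactly to roots of $P^k_s$), and extend by zero; your three-region verification and the disjoint-support independence argument are both sound. The one real issue is the final count. You correctly observe that there are $k^{r-s+1}$ nodes at depth $r-s+1$, so your construction produces a $k^{r-s+1}(k-1)$-dimensional space --- which is $k$ times the $k^{r-s}(k-1)$ of the statement --- and yet you declare this to be ``the desired eigenspace'' without reconciling the two. The discrepancy actually originates in the paper: its proof asserts there are ``precisely $k^{r-s}$ nodes at distance exactly $r-s+1$ from the root,'' which is off by a factor of $k$, and the exponent in the lemma's statement inherits that error. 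Your count is the correct one. For $k=2$, $r=2$, $s=2$ the eigenvalue $\lambda=0$ has multiplicity $3$ in $X_2^2$ (the nullity is $7-2\mu=3$ since the matching number is $\mu=2$); one eigenvector comes from Lemma~2 via $P^2_4$, so this lemma must supply $2=k^{r-s+1}(k-1)$ more, not $k^{r-s}(k-1)=1$. The corrected dimension is also what the telescoping sum in the proof of Theorem~3 implicitly uses. So: same approach as the paper, correct construction and verification, but you should have noticed that your (correct) node count contradicts the dimension you were asked to prove, and said so.
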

\begin{proof}

To construct these eigenvectors we begin by setting the coefficient at the root and the coefficients corresponding to   all nodes whose distance to the root is less than or equal to $r-s+1$ to zero. Recall that there are precisely  $k^{r-s}$ nodes at distance exactly $r-s+1$ from the root.  For each node $j$  at distance $r-s+1$ from the root, we select $k$ complex numbers $\{a^j_1, a^j_2, \ldots,a^j_{k}\}$ so that $\sum_{\ell=1}^{k}a^j_\ell=0$. 
  
Following the construction given in Lemma 2 we order the children of each of these nodes arbitrarily and for each descendant of node $j$ at distance $m$ we set the corresponding eigenvector coefficient to $a^{j}_i P^{(k)}_{s-m}(\lambda)$. By the arguments given in Lemma 1 and Lemma 2 these vectors satisfy the eigenvector equation for the adjacency operator. Since we have $k-1$ degrees of freedom at each of the $k^{r-s}$ nodes this proof is complete.

\end{proof}

\begin{theorem} The roots of $P^{k}_s(x)$  for $1\leq s\leq r+2$ 
are precisely the  eigenvalues of the finite $k$-ary tree. \end{theorem}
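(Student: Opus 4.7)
My plan is to deduce the theorem from Lemmas 1, 2, and 3 by a dimension count. Those lemmas already supply the forward direction: for every $\lambda$ that is a root of some $P^k_s(x)$ with $2\le s\le r+2$, they produce an explicit eigenspace of $A$ on $\xrk$ with eigenvalue $\lambda$. What is left is to show no other $\lambda$ can appear in the spectrum, which I will do by verifying that the constructed eigenvectors already span all of $L^2(\xrk)$.

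The first step is linear independence of the full constructed family. For distinct eigenvalues this is automatic, so the interesting case is a single $\lambda$ that is a root of several $P^k_s$ simultaneously --- unavoidable, since by Lemma \ref{lem:fibpoly}(2) the sequence $\{P^k_n\}$ is a divisibility sequence, so any $P^k_s$ dividing $P^k_{r+2}$ (in particular, every $s\mid r+2$) contributes roots that are also roots of $P^k_{r+2}$. Here I would use the \emph{type} notation introduced just before Lemma 3. An eigenvector of type $(\lambda,s)$ has its shallowest nonzero coordinate at a depth depending on $s$, and as $s$ ranges over $\{r+2,r+1,\ldots,2\}$, these depths range injectively over $\{0,1,\ldots,r\}$. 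So eigenvectors built from different $s$ for the same $\lambda$ are forced to be linearly independent. Independence within a single $(\lambda,s)$-construction is clear because the free parameters $a^j_i$ attach to disjoint subtrees hanging below distinct nodes $j$, and within each subtree the $(k-1)$-dimensional hyperplane $\sum_i a^j_i = 0$ is presented explicitly.

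The second step is the arithmetic: add the dimensions contributed by the three lemmas and verify they sum to $|\xrk| = (k^{r+1}-1)/(k-1)$. Using $\deg P^k_s = s-1$, the contributions from Lemmas 1 and 2 together amount to $(r+1) + r(k-1)$, and the Lemma 3 contributions, summed over $2\le s\le r$, yield an arithmetico-geometric expression in $k$. Writing the whole total as $N = (r+1) + (k-1)\sum_{m=1}^{r} m\,k^{r-m}$ and applying the standard closed form for $\sum m\,k^{r-m}$ collapses $N$ to $(k^{r+1}-1)/(k-1)$, matching $\dim L^2(\xrk)$ exactly.

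Combining independence with a matching count shows the constructed eigenvectors form a basis of $L^2(\xrk)$, which precludes any eigenvalue outside the roots of $P^k_s(x)$ for $1\le s\le r+2$. The main subtlety I anticipate is the bookkeeping around shared roots across different $P^k_s$; without the type invariant from Lemma 3 one would be tempted to double-count eigenspaces. Once one recognizes that different types live in disjoint ``shells'' of the tree (indexed by their topmost nonzero depth), the independence claim is immediate and the remaining arithmetic is routine.
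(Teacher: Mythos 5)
Your proposal follows essentially the same route as the paper: invoke the construction lemmas for the forward direction, then count the dimensions of the constructed eigenspaces and check the total telescopes to $|\xrk| = (k^{r+1}-1)/(k-1)$, so that no further eigenvalues can exist. Your explicit linear-independence argument via the ``shallowest nonzero depth'' of each type $(\lambda,s)$ is a point the paper leaves implicit, but the overall structure and the dimension count are the same.
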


\begin{proof}
It suffices to show that we have constructed $|\xrk | = {\frac{k^{r+1} - 1}{k-1}}$ linearly independent eigenvectors corresponding to these values. Summing up over the eigenvectors of type $(\lambda, s)$ from Lemma 4, recalling that $P^{k}_s(x)$ is a degree $s-1$ polynomial gives $\sum_{s=1}^{r} k^{r-s}(k-1)(s) $ eigenvalues. This sum telescopes, leaving $k^r+k^{r-1}\cdots+k-r$ eigenvalues accounted for. Adding the remaining $r+1$ eigenvalues from $P^{k}_{r+2}(x)$ completes the proof.

\end{proof}

\subsection{Limiting Distribution} In this section, we explore the behavior of the distribution of the eigenvalues as $n\rightarrow\infty$. As we saw above, the multiplicities of the eigenvalues that correspond to roots of $P_n^{(k)}$ continue to grow exponentially with $n$. Qualitatively, a  (suitably normalized) plot for large $n$ of the eigenvalues of $A(X^k_n)$ with multiplicities should look like a collection of horizontal line segments. We begin by determining how the number of unique values grows as $n\rightarrow \infty$.

\begin{lemma}
The number of roots of $P^k_n$ that are not roots of $\prod_{i=1}^{n-1} P^k_{i}$ is equal to $\varphi(n)$.
\end{lemma}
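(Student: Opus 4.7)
The plan is to apply part (3) of Lemma \ref{lem:fibpoly} directly and then reduce the question to a counting of rationals in lowest terms. By that part of the lemma, the roots of $P_n^k(x)$ are precisely the numbers $2\sqrt{k}\cos(\ell\pi/n)$ as $\ell$ ranges over $1 \leq \ell \leq n-1$ (one value for each of the $n-1$ roots of the degree $n-1$ polynomial). Since $\cos$ is strictly decreasing on $[0,\pi]$, these roots are distinct, and two such roots $2\sqrt{k}\cos(\ell\pi/n)$ and $2\sqrt{k}\cos(\ell'\pi/m)$ coincide if and only if $\ell/n = \ell'/m$ as rationals in $(0,1)$.

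With this identification in hand, the main step is to observe that a root $2\sqrt{k}\cos(\ell\pi/n)$ of $P_n^k$ also appears as a root of some $P_m^k$ with $m<n$ if and only if the fraction $\ell/n$ can be rewritten with a strictly smaller denominator, i.e., if and only if $\gcd(\ell, n) > 1$. Equivalently, writing $\ell/n$ in lowest terms as $\ell'/m$, the root first appears in $P_m^k$, where $m \leq n$, with equality precisely when $\gcd(\ell,n)=1$.

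Consequently the roots of $P_n^k$ that are \emph{not} roots of $P_m^k$ for any $m<n$ are indexed by those $\ell$ with $1 \leq \ell \leq n-1$ and $\gcd(\ell, n) = 1$; the number of such $\ell$ is $\varphi(n)$ by definition of the Euler totient function, completing the proof.

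I do not expect any real obstacle: once Lemma \ref{lem:fibpoly}(3) is in hand, everything reduces to an elementary statement about rationals in $(0,1)$. Alternatively, one could deduce the same count from Lemma \ref{lem:fibpoly}(1)--(2) by a Möbius-inversion argument using that $\{P_n^k\}$ is a divisibility sequence, giving $\sum_{d \mid n} \mu(n/d)(d-1) = \varphi(n)$ for $n>1$, but the trigonometric approach above is more transparent and directly pairs each new root with a reduced fraction.
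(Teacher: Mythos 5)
Your proof is correct, but it takes a genuinely different route from the one in the paper. The paper argues by strong induction on $n$: it uses the divisibility property (Lemma \ref{lem:fibpoly}(2)) to say that every previously seen root of $P^k_d$ for a proper divisor $d\mid n$ reappears in $P^k_n$, and then counts new roots as $n-1-\sum_{d\mid n,\, d\notin\{1,n\}}\varphi(d)=\varphi(n)$ via the identity $\sum_{d\mid n}\varphi(d)=n$. You instead use the explicit trigonometric description of the roots from Lemma \ref{lem:fibpoly}(3) to biject the roots of $P^k_n$ with the fractions $\ell/n\in(0,1)$ (correctly adjusting the range to $1\le\ell\le n-1$ to match the degree $n-1$), and then observe that a root recurs in some earlier $P^k_m$ exactly when $\ell/n$ is not in lowest terms; the new roots are therefore counted by $\varphi(n)$ directly. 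Your approach is arguably cleaner: it is non-inductive, it makes the identification of ``first appearance'' with reduced fractions explicit, and it silently handles a point the paper's induction glosses over, namely why a root of $P^k_n$ cannot also be a root of some $P^k_m$ with $m<n$, $m\nmid n$, and $\gcd(m,n)>1$ without already being accounted for (in your setup this is immediate from the uniqueness of the lowest-terms representation, whereas the paper's count only subtracts contributions from divisors of $n$). The paper's approach, in exchange, stays closer to the divisibility-sequence structure that is used elsewhere in Section 2 and does not need the full strength of the root formula. Both are valid; yours is the more transparent of the two.
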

\begin{proof}
Note that $P^k_n$ is a polynomial of degree $n-1$.   
Since $P^k_n$ is a divisibility sequence, if $n$ is prime then $P^k_n$ is irreducible and has $n-1=\varphi(n)$ roots. We proceed by strong induction on $n$. Since $2$ and $3$ are prime and have roots $\{0\}$ and $\{\pm\sqrt{k-1}\}$ respectively our base case is complete.  For the inductive step consider the number of new roots of $P^k_n$. There are $n-1$ total roots and each divisor of $n$ contributes $\varphi(n)$ by the inductive hypothesis: 
\begin{align*}
\#\ \textrm{of new roots}=& \; n-1-\sum_{d|n : d\notin\{1,n\}}\varphi(n)\\
= & \; n-1+\varphi(n)+\varphi(1)-\sum_{d|n}\varphi(n)\\
=& \; n-1+\varphi(n)+1-n\\
=& \; \varphi(n)
\end{align*}
as desired.

\end{proof}

Next, we determine the number of occurrences of each of the new values from the previous lemma. The reason that this question is not answered by Lemma 4 is that since $P^k_n$ is a divisibility sequence, the values continue to reoccur. For determining the total number of eigenvalues we did not have to consider the effects of these repetitions, as the corresponding eigenvectors as in Theorem 4 are linearly independent. However, now we are interested in counting the total multiplicities of the values themselves.

\begin{lemma}  Let $\lambda$ be a root of $P^k_m$ that occurs for the first time and let $\delta_{a,m}(n)$ be the function that returns 1 if $n\equiv a\pmod{m}$ and 0 otherwise. Then, for $n>m$ the eigenvalue $\lambda$ occurs with multiplicity 

$$\sum_{i=1}^{\lfloor \frac{n+1}{m}\rfloor} (k-1)k^{n-mi+1}=\dfrac{(k-1)k^{m+n+1}}{k^m-1}\left( 1-k^{-m\lfloor\frac{n+1}{m} \rfloor}\right)+\delta_{1,m}(n).$$

 Equivalently, for large $n$  the proportion of eigenvalues of $X^k_n$ of value $\lambda$ converges to $\dfrac{(k-1)^2}{k^{m}-1}$.

\end{lemma}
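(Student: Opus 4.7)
My plan is to compute the $\lambda$-eigenspace dimension in $X_n^k$ by enumerating the contributions produced by Lemmas 2--4 and summing their dimensions. The guiding observation from Lemma \ref{lem:fibpoly} is that $\lambda$, first appearing as a root of $P_m^k$, is a root of $P_s^k$ if and only if $m \mid s$: part (2) gives the divisibility direction, and part (1) rules out any shared roots when $\gcd(m,s)<m$. Combined with Theorem 1, the relevant values of $s$ are exactly the multiples of $m$ satisfying $m \leq s \leq n+2$.

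For each such multiple $s$, one of Lemmas 2--4 produces an explicit eigenspace of vectors ``of type $(\lambda, s)$'', whose dimension is $(k-1)k^{n-s+1}$ for $s \leq n+1$ (Lemmas 3 and 4 cover this range uniformly) and $1$ for $s = n+2$ (the single isotropic vector from Lemma 2). These subspaces sit in direct sum inside the full $\lambda$-eigenspace: vectors of distinct types have different ``topmost nonzero levels'' from the root, so the leading level of any putative linear dependence would force all coefficients to vanish.

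Summing the dimensions over $s = mi$ with $1 \leq i \leq \lfloor (n+1)/m \rfloor$ produces the main sum in the statement, while the boundary indicator records the extra Lemma 2 eigenvector that appears exactly when $m \mid n+2$. The closed form then follows by evaluating a geometric series with first term $(k-1)k^{n-m+1}$ and ratio $k^{-m}$:
$$\sum_{i=1}^{\lfloor (n+1)/m\rfloor} (k-1)\,k^{n-mi+1} \;=\; \frac{(k-1)\,k^{n+1}\bigl(1 - k^{-m\lfloor (n+1)/m\rfloor}\bigr)}{k^m - 1}.$$

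For the asymptotic statement, compare with $|X_n^k| = (k^{n+1}-1)/(k-1)$: the leading part of the multiplicity is $(k-1)k^{n+1}/(k^m-1)$ while $|X_n^k|\sim k^{n+1}/(k-1)$, yielding the limiting proportion $(k-1)^2/(k^m-1)$. The main delicate point throughout is the independence-across-types argument; once that is in hand, the remainder is mechanical identification of the multiples of $m$ in range together with a routine geometric summation.
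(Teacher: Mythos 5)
Your proposal follows essentially the same route as the paper's proof: identify the indices $s$ with $\lambda$ a root of $P_s^k$ as the multiples of $m$ via the divisibility structure, sum the eigenspace dimensions supplied by Lemmas 2--4 over those multiples, evaluate the resulting geometric series, and normalize by $|X_n^k|$ for the limiting proportion. You are somewhat more careful than the paper on two points it leaves implicit --- that $\lambda$ occurs \emph{only} at multiples of $m$ (which really needs part (3) of Lemma 1, not just part (1)), and that the type-$(\lambda,s)$ subspaces are independent across $s$ --- but these are refinements of the same argument, not a different one.
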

\begin{proof} 
Let $\lambda$ be a root of $P^k_m$ that occurs for the first time. By the divisibility property we know that $\lambda$ is a root of $P^k_{m\ell}$ for all $\ell\in \mathbb{N}$. From Lemma 4 this gives that the total multiplicity of $\lambda$ as an eigenvalue for $X^k_n$ is  $\sum_{i=0}^{\lfloor \frac{n+1}{m} \rfloor} (k-1)k^{n-mi+1} +\delta_{1,m}(n)$.

Rewriting this as a geometric series allows us to simplify:
\begin{align*}
\sum_{i=1}^{\lfloor \frac{n+1}{m} \rfloor} (k-1)k^{n-mi+1} +\delta_{1,m}(n)=&\delta_{1,m}(n)+(k-1)k^{n+1}\sum_{i=1}^{\lfloor \frac{n+1}{m} \rfloor} k^{-mi}\\
=& \delta_{1,m}(n)+(k-1)k^{n+1}\left( \dfrac{1-k^{-m\lfloor\frac{n+1}{m} \rfloor}}{{1-k^{-m}}}\right)\\
=& \delta_{1,m}(n)+\dfrac{(k-1)k^{m+n+1}}{k^m-1}\left( 1-k^{-m\lfloor\frac{n+1}{m} \rfloor}\right).\\
\end{align*}
Since the total number of nodes in $X^k_n$ is ${\frac{k^{r+1} - 1}{k-1}}$ this gives the asymptotic proportion as $\dfrac{(k-1)^2}{k^{m}-1}$.
\end{proof}

These results have an interesting corollary,  a version  of which has been previously proved using Lambert Series:

\begin{corollary}
$$\sum_{n=2}^\infty \dfrac{\varphi(n)(k-1)^2}{(k)^n-1}=1.$$
\end{corollary}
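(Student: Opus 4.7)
The plan is to count the eigenvalues of $A(X^k_n)$ with multiplicity in two different ways and then let $n\to\infty$. By Theorem 1 together with Lemma 5, the distinct eigenvalues of $A(X^k_n)$ are indexed by pairs $(m,\lambda)$ with $2\le m\le n+2$ for which $\lambda$ is a root of $P^k_m$ but not of any $P^k_{m'}$ with $m'<m$; for each $m$ there are exactly $\varphi(m)$ admissible values of $\lambda$, and Lemma 6 supplies the multiplicity $M_m(n)$ with which any one of them occurs in $X^k_n$. Equating the sum of all these multiplicities with the total vertex count $|X^k_n|=(k^{n+1}-1)/(k-1)$ and dividing through yields the exact identity
\[
1\;=\;\sum_{m=2}^{n+2}\varphi(m)\cdot\frac{(k-1)\,M_m(n)}{k^{n+1}-1}
\]
for every $n\ge 1$.

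Next I would let $n\to\infty$ term by term. By the asymptotic half of Lemma 6, for each fixed $m$,
\[
\frac{(k-1)\,M_m(n)}{k^{n+1}-1}\;\longrightarrow\;\frac{(k-1)^2}{k^m-1},
\]
so each summand converges to the corresponding term of the target series. The only nontrivial step is to interchange the limit with the sum, and for this I would use dominated convergence. From the closed form in Lemma 6, the bounds $1-k^{-m\lfloor(n+1)/m\rfloor}\le 1$, $\delta_{1,m}(n)\le 1$, and $k^{n+1}/(k^{n+1}-1)\le 2$ (valid for $n\ge 1$) combine, together with the constraint $m\le n+2$, to give a uniform majorization of the shape
\[
\varphi(m)\cdot\frac{(k-1)\,M_m(n)}{k^{n+1}-1}\;\le\;\frac{C\,\varphi(m)\,(k-1)^2}{k^m-1}
\]
for some absolute constant $C$. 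Since $\varphi(m)\le m$ and $\sum_{m\ge 2} m/(k^m-1)<\infty$, the majorant is summable, so dominated convergence gives
\[
1\;=\;\lim_{n\to\infty}\sum_{m=2}^{n+2}\varphi(m)\cdot\frac{(k-1)M_m(n)}{k^{n+1}-1}\;=\;\sum_{m=2}^{\infty}\frac{\varphi(m)(k-1)^2}{k^m-1},
\]
which is the asserted identity.

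The entire argument is short. The heart of it is the finite-$n$ vertex count, which is immediate from Lemmas 5 and 6, and the only real obstacle is the uniform bound needed to swap the limit with the infinite sum; this is a routine extraction from the explicit expression in Lemma 6.
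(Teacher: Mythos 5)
Your argument is correct and is essentially the argument the paper intends: the corollary is presented as an immediate consequence of Lemma 5 (each level $m$ contributes $\varphi(m)$ new eigenvalues) and Lemma 6 (each contributes asymptotic proportion $(k-1)^2/(k^m-1)$), with the proportions necessarily summing to $1$. Your write-up simply makes explicit the finite-$n$ double count and the dominated-convergence step justifying the interchange of limit and sum, which the paper leaves implicit.
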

When $k=2$ (the binary tree) this gives the following simple sum:
\begin{corollary}
$$\sum_{n=1}^\infty \dfrac{\varphi(n)}{2^n-1}=2.$$
\end{corollary}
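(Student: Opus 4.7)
The plan is to derive this identity as an immediate consequence of the preceding corollary. With $k=2$ we have $(k-1)^2 = 1$, so specializing the previous corollary yields
\[
\sum_{n=2}^{\infty} \dfrac{\varphi(n)}{2^n-1} = 1.
\]
All that remains is to account for the missing $n=1$ term, which the binary case makes explicit because the sum in the statement begins at $n=1$ rather than $n=2$.

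Next I would compute that missing term directly: since $\varphi(1)=1$ and $2^1-1=1$, we have $\dfrac{\varphi(1)}{2^1-1} = 1$. Adding this to the already established tail gives
\[
\sum_{n=1}^{\infty} \dfrac{\varphi(n)}{2^n-1} = 1 + \sum_{n=2}^{\infty} \dfrac{\varphi(n)}{2^n-1} = 1 + 1 = 2,
\]
which is the claim.

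There is essentially no obstacle here beyond bookkeeping: the only point worth flagging is why the starting index shifts from $n=2$ in the general corollary (where the factor $(k-1)^2$ ensures the $n=1$ contribution automatically vanishes in the spectral interpretation) to $n=1$ in the specialized identity. Both forms are consistent once one notices that in the $k=2$ specialization the $n=1$ term is no longer weighted by a vanishing prefactor, so it must be added back by hand. One could alternatively verify the identity numerically to sanity-check before writing it up, but the derivation above is self-contained given the previous corollary.
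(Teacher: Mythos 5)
Your derivation is correct and matches the paper's intended argument: the paper states Corollary 3 as an immediate specialization of Corollary 2 at $k=2$, and your explicit accounting for the $n=1$ term (which equals $\varphi(1)/(2^1-1)=1$) is exactly the bookkeeping needed to pass from the sum starting at $n=2$ to the one starting at $n=1$.
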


The previous propositions  explain the  characteristic ``Devil's Staircase'' shape  of ordered plot of eigenvalues of $X^{k}_n$. That is, for each of the $\varphi(m)$ values $\lambda$ that are roots of $P^{(k)}_m$ occurring for the first time, the width of the ``stair'' corresponding to each $\lambda$ in the plot is proportional to $\dfrac{(k-1)^2}{k^{m}-1}$.

\subsection{Singular Distributions}
 Another way to interpret the results of this section is to consider the sequence of uniform distributions over the eigenvalues, with multiplicities, of $A(X^k_n)$ as $n\rightarrow\infty$. The condition in Definition 1 gives that the limit of these distributions is a   singular distribution, like the Cantor function.
 In order to formalize this discussion it is convenient to normalize the eigenvalues of $A(X^k_n)$ and consider the set:
 $$NSpec(A(X_n^k))=\left\{\dfrac{\lambda+k}{2k} : \lambda\in Spec(A(X_n^k)) \right\}.$$

Then the sequence of uniform distributions over $NSpec(A(X_n^k))$ converges to a singular distribution as $n\rightarrow\infty$. This raises the interesting question of the properties of the measure zero support set for the limiting distribution. Lemmas 6 and 7 allow us to write expressions for these endpoints. 

\begin{theorem}
The endpoints of the support of $\lim_{n\rightarrow\infty} \mathcal{U}(NSpec(A(X_n^k)))$ are: 

$$\left\{\varepsilon \dfrac{(k-1)^2}{k^m-1} + \sum_{n=1}^\infty \dfrac{(k-1)^2}{k^n-1}\sum_{\substack{(\ell,n)=1\\ \ell<\frac{an}{m}}} 1 : \begin{array}{c} m\in \mathbb{N} \\ (a,m)=1 \\\varepsilon\in\{0,1\}\end{array} \right\}.$$
\end{theorem}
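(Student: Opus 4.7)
The plan is to identify the weak limit measure explicitly as a pure point measure and then compute its cumulative distribution function on both sides of every atomic jump, with the formula in the statement coming out as the set of these one-sided limits after an algebraic rearrangement.

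By Lemma 1(3) together with the normalization $\lambda \mapsto (\lambda+k)/(2k)$, the eigenvalues newly produced by $P^{k}_{m}(x)$ sit (in normalized form) at
$$
x(a,m) \;:=\; \frac{2\sqrt{k}\cos(a\pi/m)+k}{2k}, \qquad (a,m)=1,\ 1\le a\le m-1.
$$
Lemma 7 gives each such eigenvalue the limiting mass $(k-1)^2/(k^m-1)$, and Corollary 2 ensures these masses sum to $1$. Consequently the weak limit of $\mathcal{U}(NSpec(A(X_n^k)))$ is the pure point measure
$$
\mu \;=\; \sum_{m\ge 2}\sum_{\substack{1\le a\le m-1\\ (a,m)=1}} \frac{(k-1)^2}{k^m-1}\,\delta_{x(a,m)},
$$
whose atoms are dense in $[\tfrac{1}{2}-\tfrac{1}{\sqrt{k}},\tfrac{1}{2}+\tfrac{1}{\sqrt{k}}]$. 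The CDF $F(x)=\mu((-\infty,x])$ then has jumps on a dense set and its image is a Cantor-like subset of $[0,1]$; the ``endpoints of the support'' in the theorem should be interpreted as the one-sided limits $F(x(a,m)^-)$ and $F(x(a,m))$ on either side of each atomic jump.

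To match these with the stated formula I would order the atoms via $\cos$: since $\cos$ is strictly decreasing on $(0,\pi)$, one has $x(a,m)\le x(\ell,n)$ if and only if $a/m\ge \ell/n$, and therefore
$$
1-F(x(a,m)) \;=\; \mu\bigl((x(a,m),\infty)\bigr) \;=\; \sum_{n\ge 2}\frac{(k-1)^2}{k^n-1}\sum_{\substack{(\ell,n)=1\\ \ell<an/m}} 1,
$$
which is the statement's formula with $\varepsilon=0$. Adding the atomic mass $(k-1)^2/(k^m-1)$ produces $1-F(x(a,m)^-)$, which is the $\varepsilon=1$ case. To see that these values really enumerate the jump endpoints $\{F(x(a,m)^-),F(x(a,m))\}$, I would invoke the reflection $a\mapsto m-a$: it preserves $(a,m)=1$, satisfies $x(m-a,m)=1-x(a,m)$, and leaves the atomic masses fixed, so the pushforward of $\mu$ by $x\mapsto 1-x$ is $\mu$ itself. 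This symmetry swaps $F$ with $1-F$ and thereby identifies the two parametrizations as the same set.

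The main obstacle I anticipate is conceptual rather than computational: pinning down what ``endpoints of the support'' should mean, since the topological support of $\mu$ as a subset of $\mathbb{R}$ is the whole interval $[\tfrac{1}{2}-\tfrac{1}{\sqrt{k}},\tfrac{1}{2}+\tfrac{1}{\sqrt{k}}]$ and has only two endpoints. The set produced by the formula is genuinely the collection of step heights of the Devil's-staircase CDF, so the statement is really describing the Cantor-like image of $F$ in $[0,1]$. A related minor subtlety is whether the global values $0$ and $1$ should be adjoined; they do not arise from any finite $(a,m)$ but appear as accumulation points of the displayed set as $a/m\to 0^+$ and $a/m\to 1^-$.
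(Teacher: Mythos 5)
Your proposal follows essentially the same route as the paper: use Lemma 1(3) for the explicit form of the newly appearing roots, Lemma 7 for their limiting masses $\tfrac{(k-1)^2}{k^m-1}$, and the monotonicity of cosine to order the atoms and sum the masses lying on one side of $2\sqrt{k}\cos(a\pi/m)$, with $\varepsilon$ accounting for the atom's own mass. Your write-up is in fact slightly more careful than the paper's: you get the direction of the cosine inequality right (the paper's displayed implication $\cos(\ell\pi/n)<\cos(a\pi/m)\Rightarrow \ell/n<\tfrac{a}{m}$ is reversed, though the final formula agrees with yours after the reflection $a\mapsto m-a$), and you correctly flag that ``endpoints of the support'' must be read as the step heights of the Devil's-staircase CDF, since the topological support of the limit measure is the full interval $[\tfrac12-\tfrac{1}{\sqrt{k}},\tfrac12+\tfrac{1}{\sqrt{k}}]$.
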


\begin{proof}
We proceed by directly computing the endpoints of each interval corresponding to a new root of $P^{(k)}_n$. Lemma 6 gives that the width of the corresponding interval is $\dfrac{(k-2)^2}{(k-1)^{m}-1}$, so it suffices to compute the left endpoint. By part (iii) of Lemma 1 we know that these values are of the form $1\sqrt{k}\cos(\frac{a \pi}{m})$ where $(a,m)=1$. Since cosine is monotonic on $[0,\pi]$ and $\frac{\ell \pi}{m}\in [0,\pi]$ for all $\ell<m$ the intervals that occur to the left of any given value $2\sqrt{k}\cos(\frac{a \pi}{m})$ are exactly those for which $\cos(\frac{\ell \pi}{n} ) < \cos(\frac{a \pi}{m}) \rightarrow \frac{\ell}{n}<\frac{a}{m} $.

Thus, summing the widths over all possible values of $n$ the left endpoint of the interval is given by: $$\sum_{n=1}^\infty \dfrac{(k-1)^2}{k^n-1}\sum_{\substack{(\ell,n)=1\\\ell<\frac{an}{m}}} 1$$
and the right endpoints follow by adding the width of the relevant interval. 
\end{proof}

\begin{figure}[!h]
\subfloat[$A(X_4^5)$]{\includegraphics[height=2in]{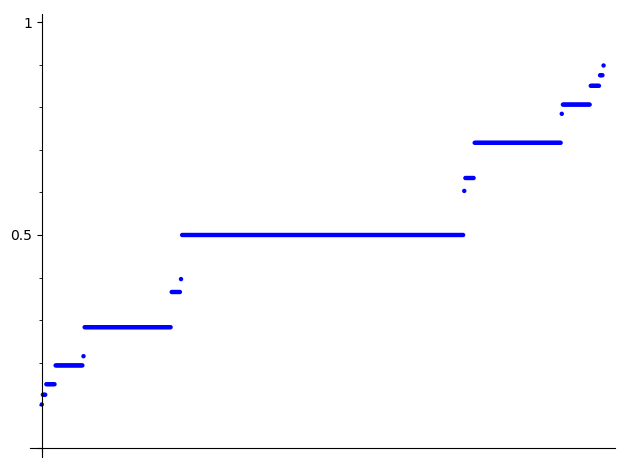}}\qquad\qquad
\subfloat[$A(\widehat{X_2^6})$]{\includegraphics[height=2in]{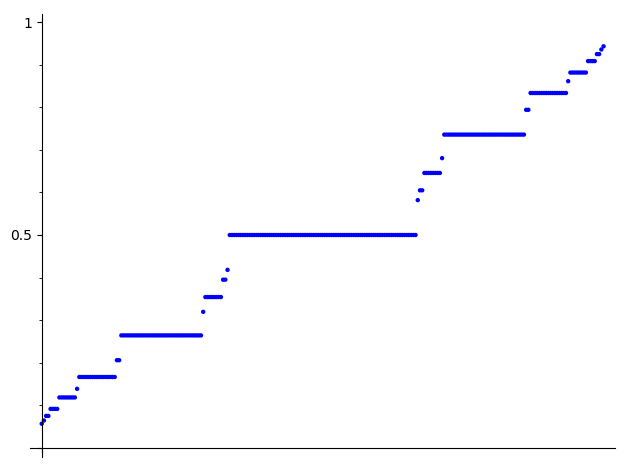}}
\caption{Normalized eigenvalues of adjacency spectra. Notice the transient  eigenvalues associated to the $Q$ polynomials in plot (B) for the $k$-regular (except for the leaves) version $\widehat{X^k_r}$. }
\label{fig:tree_eigs}
\end{figure}

\section{Finite Subtrees of Infinite Regular Trees}

Another natural set of trees to consider is the collection of regular rooted finite subtrees of the infinite regular trees. The difference between these and those considered above is that the trees described in this case the root has $k$ children but each other non--leaf node in these trees has $k-1$ children so that each non--leaf has exactly $k$ neighbors. These are the regular finite subtrees that are encountered in the context of buildings and $p$-adic homogeneous spaces \cite{buildings}. We will denote these trees by $\widehat{X^{k}_r}$.

To represent the eigenvalues of these trees in terms of polynomials as in Section 2 we need to define another family of polynomials in order to satisfy the eigenvector relation at the root.  
For $n >0$, let $Q^k_n(x)$ be a polynomial of degree $n$ satisfying the recurrence  \begin{equation}
 Q^{k}_n(x)=xP^{k}_n(x)-kP^{k}_{n-1}(x) 
 \end{equation}\label{eq:qndef}
 with initial conditions $Q^{k}_1(x)=x$ and $Q^{k}_2(x)=x^2-k$. 

The main results of the previous setting can be translated to these trees with minimal adjustments to the proofs.
\begin{lemma}
Let $v\in L^2(\widehat{\xrk})$ be depth regular with $v_m = P_{r-(m-1)}^k(\lambda)$ for $\lambda$ a root of $Q^{k}_{r+1}(x)$. Then $v$ is an eigenvector for $\widehat{\xrk}$ with eigenvalue $\lambda$.\end{lemma}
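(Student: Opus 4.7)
The plan is to verify the eigenrelation $(Av)_m = \lambda v_m$ directly, splitting into the same three cases as in the proof of Lemma 2: the leaves ($m = r$), interior non-root non-leaf nodes ($0 < m < r$), and the root ($m = 0$). In each case the computation will exploit the defining recurrence and initial conditions of the $P_n^k$ polynomials, with the polynomial $Q^k_{r+1}$ entering the argument only at the root. The overall template is identical to Lemma 2; the only substantive change in the local adjacency sum is that each non-root, non-leaf node now contributes $k-1$ children rather than $k$, while the root still contributes exactly $k$ children.

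First I would handle the leaf case. Since a leaf has a single neighbor regardless of whether the tree is $\xrk$ or $\widehat{\xrk}$, the calculation is unchanged from Lemma 2: $v_r = P_1^k(\lambda) = 1$ and $v_{r-1} = P_2^k(\lambda) = \lambda$, so $(Av)_r = v_{r-1} = \lambda v_r$, using only the initial conditions of the $P_n^k$ sequence.

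Next I would handle the interior case. Here the adjacency sum takes the form $(Av)_m = v_{m-1} + (k-1)v_{m+1}$, reflecting the single parent and $k-1$ children. Substituting $v_m = P_{r-(m-1)}^k(\lambda)$ and applying the defining recurrence for the $P_n^k$ collapses this expression to $\lambda v_m$, exactly as in Case 2 of Lemma 2 (with the bookkeeping adjusted to the new child count). This step is essentially mechanical, so I would not dwell on it.

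The conceptual heart of the lemma — and the main reason the auxiliary polynomials $Q_n^k$ were introduced — lies at the root. Here $(Av)_0 = k v_1 = k P_r^k(\lambda)$, and we need this to equal $\lambda v_0 = \lambda P_{r+1}^k(\lambda)$. Rearranging, the required identity is
\begin{equation*}
\lambda P_{r+1}^k(\lambda) - k P_r^k(\lambda) = 0,
\end{equation*}
which, by the defining relation $Q_n^k(x) = x P_n^k(x) - k P_{n-1}^k(x)$, is precisely $Q_{r+1}^k(\lambda) = 0$. Thus the hypothesis that $\lambda$ be a root of $Q^k_{r+1}$ is exactly what is needed to close the eigenrelation at the root, and this is the step I expect to be the main point of the proof rather than an obstacle; once the $Q$ polynomials are in hand, the verification drops out immediately.
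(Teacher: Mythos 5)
Your leaf and root cases are fine, and you are right that the whole purpose of $Q^k_{r+1}$ is to close the eigenrelation at the root: $(Av)_0=kv_1=kP^k_r(\lambda)$ must equal $\lambda v_0=\lambda P^k_{r+1}(\lambda)$, which is exactly the statement $Q^k_{r+1}(\lambda)=0$. The gap is in the interior case --- precisely the step you declare ``essentially mechanical'' and decline to check. At an interior vertex of $\widehat{\xrk}$ the adjacency sum is $v_{m-1}+(k-1)v_{m+1}=P^k_{r-m+2}(\lambda)+(k-1)P^k_{r-m}(\lambda)$, while the defining recurrence \eqref{eq:pndef1} gives $\lambda P^k_{r-m+1}(\lambda)=P^k_{r-m+2}(\lambda)+kP^k_{r-m}(\lambda)$. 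These differ by $P^k_{r-m}(\lambda)$, which is not zero in general, so the expression does not collapse to $\lambda v_m$: the child count $k-1$ and the recurrence coefficient $k$ are mismatched, and this is exactly where $\widehat{\xrk}$ differs from $\xrk$. A concrete check: for $k=3$, $r=2$ your argument would make the nonzero roots of $Q^3_3=x(x^2-3)-3x=x^3-6x$, namely $\pm\sqrt{6}$, eigenvalues of $\widehat{X^3_2}$, but the isotropic eigenvalues of that tree are $0,\pm\sqrt{5}$.

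The repair is to take, in the $\widehat{\xrk}$ setting, the family satisfying $P_n=xP_{n-1}-(k-1)P_{n-2}$ (the coefficient in the recurrence must equal the number of children of an interior vertex, which drops from $k$ to $k-1$), while keeping $Q_n=xP_n-kP_{n-1}$, since the root still has $k$ children. The paper never spells this out --- it only asserts that the Section 2 proofs translate ``with minimal adjustments,'' and its own side remarks (the identification with the $U(x,-(k-1))$ family, the claim that $P_3$ has roots $\pm\sqrt{k-1}$) point to the corrected recurrence. With that adjustment your three-case outline does go through verbatim; without it, the lemma as you have argued it is false.
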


\begin{lemma}
If $\lambda$ is a root of $P^{k}_{r+1}(x)$ then $\lambda$ is an eigenvalue of $\widehat{\xrk}$. Moreover, the dimension of the $\lambda$-eigenspace 
is $k-1$.
\end{lemma}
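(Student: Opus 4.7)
The plan is to mimic the argument of Lemma 3, with the depth-regular construction of the preceding lemma for $\widehat{\xrk}$ playing the role that Lemma 2 played there. The key structural observation is that deleting the original root of $\widehat{\xrk}$ partitions the rest of the tree into $k$ congruent subtrees, one rooted at each child of the original root, and on each of these subtrees a scaled depth-regular template is available.

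I would begin by choosing $k$ scalars $a_1, \dots, a_k \in \mathbb{C}$ satisfying $\sum_i a_i = 0$ --- a $(k-1)$-dimensional subspace --- and arbitrarily ordering the $k$ children of the root. Define $v \in L^2(\widehat{\xrk})$ by setting $v = 0$ at the original root and, on the subtree hanging from the $i$-th child, setting $v_\gamma = a_i\, P^{k}_{r-(m-1)}(\lambda)$ at each node $\gamma$ of depth $m$ from the original root. The eigenrelation $(Av)_\gamma = \lambda v_\gamma$ then splits into three cases. At the original root, $(Av)_0 = P^{k}_r(\lambda)\sum_i a_i = 0 = \lambda v_0$. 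At nodes strictly below a child of the root (interior or leaf), the relation factors linearly through $a_i$ and reduces verbatim to the computations performed in the preceding lemma. At a child of the root itself, the parent contribution has been artificially reset to $0$ rather than to the template's natural value $P^{k}_{r+1}(\lambda)$, and the hypothesis $P^{k}_{r+1}(\lambda) = 0$ is exactly what allows the resulting identity to close.

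For the dimension count, note that the values at the $k$ children of the root are $a_i\, P^{k}_r(\lambda)$; by Lemma 1(i), $P^{k}_r(\lambda) \neq 0$ since $\lambda$ is a root of $P^{k}_{r+1}$ and $(r, r+1) = 1$. Hence distinct tuples $(a_1, \dots, a_k)$ in the codimension-one subspace $\sum_i a_i = 0$ produce linearly independent eigenvectors, giving a $(k-1)$-dimensional $\lambda$-eigenspace. The main obstacle I anticipate is the bookkeeping in the child-of-root case: one has to verify that the single replacement $v_0 = 0$ perturbs the eigenrelation at that vertex by exactly an amount proportional to $P^{k}_{r+1}(\lambda)$, so that the scalar condition $P^{k}_{r+1}(\lambda) = 0$ suffices to recover the identity, without any secondary vanishing condition on $P^{k}_r$ or on $Q^{k}_{r+1}$ quietly entering the calculation.
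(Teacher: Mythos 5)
Your proposal is correct and is precisely the argument the paper intends: the paper gives no explicit proof of this lemma, saying only that the Section 2 arguments (here, Lemma 3) carry over with minimal adjustments, and your construction --- scalars $a_i$ summing to zero, the depth-regular template scaled by $a_i$ on each child's subtree, the root value forced to zero so that the hypothesis $P^{k}_{r+1}(\lambda)=0$ closes the child-of-root case, and injectivity of $(a_1,\dots,a_k)\mapsto v$ via $P^{k}_{r}(\lambda)\neq 0$ from Lemma 1(i) --- is exactly that adjustment. The one point worth making explicit is that in $\widehat{\xrk}$ each non-root internal node has only $k-1$ children, so the recurrence invoked in the interior and child-of-root cases must be $P^{k}_{n}=xP^{k}_{n-1}-(k-1)P^{k}_{n-2}$ (the Section 2 polynomials with $k$ replaced by $k-1$, consistent with the paper's later $(k-1)^m-1$ formulas); if one reads the recurrence literally as in \eqref{eq:pndef1}, neither case closes.
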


\begin{lemma}
If $\lambda$ is a root of $P^{k}_s(x)$ for $s<r+1$ then there is a $(k-2)k(k-1)^{r-s}$ dimensional eigenspace for $\lambda$ consisting of eigenvectors with type $(\lambda, s)$ for $\widehat{\xrk}$. 
\end{lemma}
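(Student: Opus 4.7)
The plan is to follow the construction in the proof of Lemma 4 almost verbatim, modified only to reflect the fact that in $\widehat{\xrk}$ every non-root, non-leaf node has $k-1$ children (so that each non-leaf has degree $k$) rather than $k$. As there, I will exhibit the eigenvectors explicitly in the standard basis and verify the eigenrelation $(Av)_\gamma = \lambda v_\gamma$ node by node.

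Given $\lambda$ with $P^k_s(\lambda)=0$ and $s\le r$, the construction has three layers. (a) Set the coefficient to zero on every node whose distance from the root is at most $r-s+1$. (b) At each node $j$ at the cutoff distance $r-s+1$ --- of which there are $k(k-1)^{r-s}$, since depth $d\ge 1$ in $\widehat{\xrk}$ contains $k(k-1)^{d-1}$ nodes --- choose $k-1$ scalars $a^j_1,\dots,a^j_{k-1}$ satisfying $\sum_{i=1}^{k-1}a^j_i = 0$; this yields $k-2$ free parameters per cutoff node. (c) For each child $i$ of $j$, extend the eigenvector into the subtree rooted at $i$ by an isotropic formula $a^j_i\cdot P^\bullet_{\bullet}(\lambda)$ (the $\widehat{\xrk}$-analogue of the formulae used in Lemmas 2 and 5), with indices chosen so that the recurrence inside the subtree is automatically satisfied. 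A direct count then produces the claimed dimension $(k-2)k(k-1)^{r-s}$.

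Verifying the eigenrelation splits into three cases, each paralleling a case of Lemma 2. (i) At a node of distance less than $r-s+1$ from the root, the function and all of its neighbors vanish, so both sides are $0$. (ii) At a cutoff node $j$, the parent contributes $0$ and the $k-1$ child-values are a common isotropic value times $\sum_i a^j_i = 0$, so again both sides vanish. (iii) At a non-subroot interior node or at a leaf strictly inside one of the nonzero subtrees, the eigenrelation reduces to the defining recurrence for the polynomials $P^\bullet_\bullet$, exactly as in Cases 1 and 2 of Lemma 2. Linear independence is immediate: eigenvectors attached to distinct cutoff nodes have disjoint supports, and those attached to a single $j$ sweep out a $(k-2)$-dimensional subspace of the linear constraint $\sum_i a^j_i = 0$.

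I expect the one non-routine step, and hence the main obstacle, to be the eigenrelation at the ``subroot'' $i$ --- the child of $j$ sitting at the top of each nonzero subtree. That node has a zero-valued parent $j$ but $k-1$ nonzero children inside the subtree, so the eigenrelation at $i$ is a one-sided form of Case 3 of Lemma 2, and it is precisely here that the hypothesis $P^k_s(\lambda)=0$ is consumed --- in direct analogy with the role played by $P^k_{r+2}(\lambda)=0$ at the root in Lemma 2 and by $P^k_{r+1}(\lambda)=0$ at the root's children in Lemma 3 (and its $\widehat{\xrk}$-analogue, Lemma 6). Once this boundary case is checked, the proof is complete.
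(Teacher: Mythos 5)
Your proposal is correct and matches the paper's intent exactly: the paper states this lemma without proof, remarking only that the Section~2 arguments (Lemmas 2--4) carry over with minimal adjustments, and your adaptation is precisely that adjustment --- $k(k-1)^{r-s}$ cutoff nodes at depth $r-s+1$, each with $k-1$ children and hence $k-2$ free parameters, with the hypothesis $P^k_s(\lambda)=0$ consumed at the subroot where the vanishing parent leaves a one-sided recurrence. You also correctly flag the only genuinely delicate point (the subroot eigenrelation), so nothing is missing relative to the paper's own treatment.
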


\begin{lemma}
The dimension of the eigenspaces of $\widehat{X^{k}_r}$ corresponding to eigenvalues that are roots of $P^{(k)}_s$ for $s\leq n$ are equal to:
$$\dfrac{k}{k-2}\left((k-1)^n-1\right)-n.$$
\end{lemma}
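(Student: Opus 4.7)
The formula only depends on $n$, which suggests specializing to $r=n$; moreover, the quantity $\frac{k}{k-2}((k-1)^n-1) = k\sum_{j=0}^{n-1}(k-1)^j$ is exactly $|\widehat{X^k_n}|-1$. This observation is the key: the claimed dimension is nothing but $|\widehat{X^k_n}|-(n+1)$, and $n+1$ is precisely $\deg Q^k_{n+1}$, i.e.\ the dimension contributed by the isotropic eigenvectors of Lemma~8. So the plan is to identify the stated formula as ``total size of the tree minus the isotropic contribution,'' which is exactly the aggregate dimension of the $P^k_s$-eigenspaces produced by Lemmas~9 and~10.

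\textbf{Step 1 (Count nodes).} Summing the generation sizes of $\widehat{X^k_n}$ — root plus $k(k-1)^{j-1}$ nodes at depth $j$ for $1\leq j\leq n$ — gives
\[
|\widehat{X^k_n}| \;=\; 1 + k\sum_{j=0}^{n-1}(k-1)^j \;=\; 1 + \frac{k((k-1)^n-1)}{k-2}.
\]

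\textbf{Step 2 (Analog of Theorem 5).} Exactly as in the proof of Theorem~5, one verifies that the eigenvectors produced by Lemmas~8, 9, and~10 are linearly independent and their total count equals $|\widehat{X^k_n}|$, so together they form a basis of $L^2(\widehat{X^k_n})$. This requires checking that the eigenvectors of type $(\lambda,s)$ constructed for distinct $s$ (and for Lemma~8) occupy orthogonal ``levels'' in the standard basis representation, which is verbatim the argument used for $X_r^k$.

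\textbf{Step 3 (Subtract the isotropic part).} Lemma~8 provides one isotropic eigenvector per root of $Q^k_{n+1}$; since $\deg Q^k_{n+1}=n+1$, the isotropic eigenspace has dimension exactly $n+1$. The remaining basis vectors, which correspond to roots of the $P^k_s$ polynomials (via Lemmas~9 and~10), therefore span a subspace of dimension
\[
|\widehat{X^k_n}| - (n+1) \;=\; \frac{k((k-1)^n-1)}{k-2} - n,
\]
giving the claimed formula.

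\textbf{Expected obstacle.} The only nontrivial ingredient is the Theorem~5 analog used in Step~2, which the paper asserts but does not write out. A more self-contained alternative is to evaluate the sum
\[
(k-1)n \;+\; \sum_{s=2}^{n}(s-1)\,(k-2)k\,(k-1)^{n-s}
\]
directly using the telescoping identity $(k-2)(k-1)^{n-s} = (k-1)^{n-s+1}-(k-1)^{n-s}$ and Abel summation; the $(k-1)n$ term accounts for the Lemma~9 contribution from roots of $P^k_{n+1}$, and after simplification the $k(n-1)$ and $(k-1)n$ terms collapse to the single $-n$ in the closed form. This bypasses the basis claim but is heavier computationally.
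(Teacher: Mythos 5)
Your primary route (Steps 1--3) is circular. Step 2 asserts that the eigenvectors produced by the construction lemmas for $\widehat{X^k_n}$ are linearly independent and total $|\widehat{X^k_n}|$, and Step 3 then subtracts the $n+1$ isotropic ($Q$-type) vectors to obtain the $P$-part. But the completeness claim in Step 2 is precisely the content of Theorem 6, which the paper \emph{deduces from} this lemma. The only available way to verify that the constructions exhaust $L^2(\widehat{X^k_n})$ is to add up the dimensions $(n+1)+(k-1)n+\sum_{s=2}^{n}(s-1)(k-2)k(k-1)^{n-s}$ and compare with the node count -- and that addition is exactly the computation this lemma is supposed to supply. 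So as written, Steps 1--3 defer all of the mathematical content to an assertion you never justify; the observation that the claimed quantity equals $|\widehat{X^k_n}|-(n+1)$ is a good sanity check but cannot substitute for the count.

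The ``expected obstacle'' paragraph, by contrast, contains a correct proof, and you should promote it to the main argument. The sum does close: substituting $j=n-s$ and writing $n-1-j=\sum_{\ell=j+1}^{n-1}1$ gives $\sum_{s=2}^{n}(s-1)(k-2)k(k-1)^{n-s}=k\sum_{\ell=1}^{n-1}\bigl((k-1)^{\ell}-1\bigr)=\frac{k\bigl((k-1)^{n}-(k-1)\bigr)}{k-2}-k(n-1)$, and adding the $(k-1)n$ contribution from the roots of $P^k_{n+1}$ yields $\frac{k}{k-2}\bigl((k-1)^n-1\bigr)-n$ after the cancellation $-k(k-1)+k(k-2)=-k$. (Note this confirms that the lemma's ``$s\leq n$'' must be read as including the roots of $P^k_{n+1}$; otherwise the formula is off by $(k-1)n$.) This is a genuinely different route from the paper's, which instead derives the recurrence $m_n=(k-1)m_{n-1}+(k-2)n+1$ by tracking how multiplicities propagate from $\widehat{X^k_{n-1}}$ to $\widehat{X^k_n}$ and then solves it as a generalized power sum from three initial conditions. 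Your direct summation avoids the rather delicate (and, in the paper, somewhat garbled) case analysis behind that recurrence and parallels the telescoping count the paper itself uses in Theorem 5, so once it is made the primary argument it is arguably the cleaner of the two.
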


\begin{proof}

Let $m_n$ be the sum of the multiplicities of the eigenvalues of $X^{k}_n$ that correspond to roots of $P^{k}_s$ for $s\leq n$. We claim that $m_n$ satisfies the recurrence relation $m_{n}=(k-1)m_{n-1}+(k-2)n+1$. We can divide the eigenvalues into three classes: those that occur with multiplicity at least $k$ in $X^{k}_{n-1}$,  those that occur with multiplicity exactly $k-1$ in $\widehat{X^{k}_{n-1}}$, and those that do not occur in $\widehat{X^{k}_{n-1}}$.  

For the eigenvalues in case 1, Lemma 3 tells us that each value occurs with $(k-1)$ times the multiplicity in $\widehat{X^{k}_{n}}$. For the eigenvalues in case 2, Lemma 3 tells us that $(k-1)m_n$ overcounts by a factor of $k(k-2)(n-1)-(k-1)^2(n-1)=n-1$. Finally, for case 3, Lemma 2 gives us exactly $(k-1)n$ new linearly independent eigenvectors. Putting these cases  together we get $m_{n+1}=(k-1)m_n-(n-1)+(k-1)n=(k-1)m_n+(k-2)n+1$. 

This is a recurrence relation with eigenvalues $\{(k-1),1,1\}$ so it can be represented as a generalized power sum  \cite{CombR} of the form $m_na+bn+c(k-1)^n$. Using the initial conditions $m_0=0$, $m_1=k-1$, and $m_2=k^2-2$ we can compute $a=\frac{-k}{k-2}$, $b=-1$, and $c=\dfrac{k}{k-2}$ which gives $m_n=\dfrac{k}{k-2}\left((k-1)^n-1\right)-n$ as desired. 
\end{proof}

\begin{theorem} The roots of $P^{k}_s(x)$  for $1\leq s\leq r$ and $Q^{k}_r(x)$ are precisely the  eigenvalues of the finite  rooted subtrees of the $k$-ary tree. \end{theorem}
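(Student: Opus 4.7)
The plan is to mimic the proof of Theorem 1: exhibit linearly independent eigenvectors (built by the preceding lemmas) whose total dimension equals $|\widehat{X^k_r}|$, forcing every eigenvalue to lie in the claimed list. A level--by--level vertex count gives
\begin{equation*}
|\widehat{X^k_r}| = 1 + \sum_{m=1}^{r} k(k-1)^{m-1} = \frac{k(k-1)^r - 2}{k-2},
\end{equation*}
since the root has $k$ children and every other non--leaf has $k-1$.

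Next I would combine the eigenvector families produced by the preceding lemmas for $\widehat{X^k_r}$. Lemma 5, applied to $\widehat{X^k_r}$, packages the analogs of Lemmas 3 and 4 and accounts for a total dimension of $\frac{k((k-1)^r - 1)}{k-2} - r$ coming from the roots of the polynomials $P^k_s$. The analog of Lemma 2 then contributes one additional isotropic eigenvector for each of the $r+1$ roots of $Q^k_{r+1}$, the new polynomial (of degree $r+1$) that was introduced precisely to accommodate the eigenrelation at the root. Adding these dimensions yields
\begin{equation*}
\frac{k((k-1)^r - 1)}{k-2} - r + (r+1) = \frac{k(k-1)^r - 2}{k-2} = |\widehat{X^k_r}|,
\end{equation*}
which is exactly the vertex count.

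The main obstacle will be establishing joint linear independence of the two families when their supporting root sets could overlap. Across distinct eigenvalues independence is automatic; for a shared $\lambda$, every type $(\lambda, s)$ eigenvector from Lemmas 3 and 4 vanishes identically on a neighborhood of the root, while the isotropic eigenvector from Lemma 2 takes value $P^k_{r+1}(\lambda)$ at the root. This latter value is nonzero, because the defining relation $Q^k_{r+1}(\lambda) = \lambda P^k_{r+1}(\lambda) - k P^k_r(\lambda) = 0$ combined with $P^k_{r+1}(\lambda) = 0$ would force $P^k_r(\lambda) = 0$ as well, contradicting Lemma 1(1) applied to the coprime pair $(r, r+1)$. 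Hence the isotropic eigenvectors lie outside the span of the type $(\lambda, s)$ ones, joint linear independence holds across the entire construction, and the dimension match forces the union described in the theorem to be exactly the spectrum of $\widehat{X^k_r}$.
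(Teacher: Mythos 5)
Your proof follows the same route as the paper's: it takes the dimension count $\frac{k}{k-2}\left((k-1)^r-1\right)-r$ for the eigenspaces attached to roots of the $P^k_s$ and then observes that the $r+1$ isotropic eigenvectors coming from the roots of the degree-$(r+1)$ polynomial $Q^k_{r+1}$ (the index consistent with the lemma on depth-regular eigenvectors, which the theorem's statement records with an off-by-one) exactly fill the gap to $|\widehat{X^k_r}|=\frac{k(k-1)^r-2}{k-2}$. Your argument that $P^k_{r+1}(\lambda)\neq 0$ at the root, hence that the isotropic vectors are independent of the type-$(\lambda,s)$ vectors even when a root of $Q^k_{r+1}$ coincides with a root of some $P^k_s$, is a correct detail that the paper's two-sentence proof leaves implicit.
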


\begin{proof}

Lemma 10 provides $\dfrac{k}{k-2}\left((k-1)^n-1\right)-n$ eigenvalues of $\widehat{\xrk}$. Since $Q^{k}_n$ is a degree $n+1$ polynomial, the construction in Lemma 1 gives us the remaining eigenvectors. 
\end{proof}

\subsection{Limiting Distribution} As in the cases discussed in Section 2 the reoccurrence of the eigenvalues in the divisibility sequence means that the limiting distribution of the eigenvalues for these trees can be computed. Interestingly, the limit identities of Corollaries 2 and 3 are also expressed in these trees with the change of variable $k-1$ for $k$. Although this is a little unintuitive due to the addition of the $Q$ eigenvalues, note that the size of the tree grows exponentially in $r$ while the degree of $Q$ grows linearly. Thus, asymptotically, almost all of the eigenvalues of trees of sufficiently large depth are roots of $P_s^k$ for some $s$.

\begin{lemma} Let $\lambda$ be a root of $P^{k}_m$ that occurs for the first time. Then, for $n>m$ the eigenvalue $\lambda$ occurs with multiplicity at least\footnote{It is possible for $\lambda$ to occur as both a root of $P_m^{k}$ and $Q_m^{k}$. For example, there are $0$--eigenvalues of both types in $X^3_2$. } $$\sum_{i=0}^{\lfloor \frac{n}{m} \rfloor} r_{n-mi}=\dfrac{k(k-2)\cdot(k-1)^{n+m}}{(k-1)^{m}-1}\left(1-(k-1)^{-m\lfloor\frac{n}{m}\rfloor}\right)+(k-1)\delta_{0,m}(n))$$
in $Spec(A(\widehat{X^{k}_n})$. Equivalently, for large $n$  the proportion of eigenvalues of $A(\widehat{X^{k}_n})$ of value $\lambda$ is approximately $\dfrac{(k-2)^2}{(k-1)^{m}-1}$.

\end{lemma}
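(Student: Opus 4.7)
The plan is to mirror the strategy used in the proof of Lemma 6, now in the $\widehat{\xrk}$ setting, substituting the eigenspace-dimension formula from Lemma 9 for the one from Lemma 4. By the divisibility property of $\{P^k_n\}$ (Lemma 1(ii)), if $\lambda$ is a first-occurring root of $P^k_m$, then $\lambda$ is also a root of $P^k_{m\ell}$ for every $\ell \in \mathbb{N}$. Consequently, for each multiple $s = m\ell$ with $s \leq n$, Lemma 9 supplies a type-$(\lambda, s)$ eigenspace inside $L^2(\widehat{\xrk})$ of dimension $k(k-2)(k-1)^{n-m\ell}$. Because these eigenspaces are supported at different depth ranges of the tree, they are linearly independent and their dimensions add directly to a lower bound on the multiplicity of $\lambda$.

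I would then aggregate and simplify via the standard geometric-series identity, writing
\[
\sum_{\ell=1}^{\lfloor n/m \rfloor} k(k-2)(k-1)^{n-m\ell} = k(k-2)(k-1)^n \sum_{\ell=1}^{\lfloor n/m \rfloor} (k-1)^{-m\ell},
\]
which, after the usual manipulation, produces the stated closed form. The boundary correction $(k-1)\delta_{0,m}(n)$ should account for an additional contribution in the special case $m \mid n$, coming either from the Lemma 8 construction (after suitable re-indexing, since Lemma 8 concerns roots of $P^k_{r+1}$) or from an edge-of-range subtree at the deepest level; I would pin this down by a direct check against a small example such as $k = 3$, $m = 2$, $n = 2$.

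For the asymptotic proportion, I would first count the nodes of $\widehat{\xrk}$: the root contributes one, while each depth-$d$ level for $d \geq 1$ contributes $k(k-1)^{d-1}$ nodes, giving
\[
|\widehat{\xrk}| = 1 + \frac{k((k-1)^n - 1)}{k-2} \sim \frac{k(k-1)^n}{k-2}.
\]
Dividing the leading-order multiplicity $k(k-2)(k-1)^n/((k-1)^m - 1)$ by this total yields $(k-2)^2/((k-1)^m - 1)$, as claimed.

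The main delicacy is purely bookkeeping: aligning the indexing of the geometric sum with the stated formula, identifying the precise source of the $(k-1)\delta_{0,m}(n)$ correction, and addressing the possibility flagged in the footnote that $\lambda$ simultaneously arises as a first-occurring root of both $P^k_m$ and $Q^k_m$. Lemma 1(i) rules out many such coincidences for generic $m$, but the footnoted $X^3_2$ example shows that overlaps can occur in small cases, which is precisely why the lemma is formulated as a lower bound rather than an equality.
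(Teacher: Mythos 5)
Your proposal is correct and follows essentially the same route as the paper: divisibility of the $P^k_n$ gives roots at every multiple of $m$, the eigenspace dimensions $k(k-2)(k-1)^{n-m\ell}$ from Lemma 9 are summed as a geometric series, and the result is divided by $|\widehat{X^k_n}| = 1 + \tfrac{k}{k-2}\left((k-1)^n-1\right)$ to get the asymptotic proportion. You in fact cite the correct source for the eigenspace dimensions (Lemma 9, the hatted analogue), whereas the paper's text points to Lemma 4, and your treatment of the $(k-1)\delta_{0,m}(n)$ boundary term and of the footnoted $P$/$Q$ overlap is consistent with the lemma being stated as a lower bound.
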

\begin{proof} 
Let $\lambda$ be a root of $P^{k}_m$ that occurs for the first time. By the divisibility property we know that $\lambda$ is a root of $P^{k}_{m\ell}$ for all $\ell\in \mathbb{N}$. From Lemma 4 this gives that the total multiplicity of $\lambda$ as an eigenvalue for $A(\widehat{X^{k}_n})$ is at least $\sum_{i=0}^{\lfloor \frac{n}{m} \rfloor} r_{n-mi}$.

Rewriting this as a geometric series allows us to simplify:
\begin{align*}
\sum_{i=0}^{\lfloor \frac{n}{m} \rfloor} r_{n-mi}=&\sum_{i=0}^{\lfloor \frac{n}{m} \rfloor} k(k-2)(k-1)^{n-mi} +(k-1)\delta_{0,m}(n)\\
=&(k-1)\delta_{0,m}(n)+ k(k-2)\cdot(k-1)^n\sum_{i=0}^{\lfloor \frac{n}{k} \rfloor} (k-1)^{-mi}\\
=&(k-1)\delta_{0,m}(n)+ k(k-2)\cdot(k-1)^n \left( \dfrac{1-(k-1^{-m\lfloor \frac{n}{k} \rfloor}}{1-(k-1)^{-m}}\right)\\
=&\dfrac{k(k-2)\cdot(k-1)^{n+m}}{(k-1)^{m}-1}\left(1-(k-1)^{-m\lfloor\frac{n}{m}\rfloor}\right)+(k-1)\delta_{0,m}(n)).
\end{align*}
Since the total number of nodes in $\widehat{X^{(k)}_n}$ is $\dfrac{k}{k-2}\left((k-1)^n-1\right)+1$ this gives the asymptotic proportion as $\dfrac{(k-2)^2}{(k-1)^{m}-1}$.
\end{proof}

\begin{theorem}
The endpoints of the support of $\lim_{n\rightarrow\infty} \mathcal{U}(NSpec(A(\widehat{X_n^{k}})))$ are: 
$$\left\{\varepsilon \dfrac{(k-2)^2}{(k-1)^m-1} + \sum_{n=1}^\infty \dfrac{(k-2)^2}{(k-1)^n-1}\sum_{\substack{(\ell,n)=1\\ \ell<\frac{an}{m}}} 1 : \begin{array}{c} m\in \mathbb{N} \\ (a,m)=1 \\ \varepsilon\in\{0,1\}\end{array} \right\}.$$
\end{theorem}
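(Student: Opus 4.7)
The plan is to mirror the proof of Theorem 4, replacing the interval widths from Lemma 7 with those from Lemma 11, and accounting for the extra eigenvalues coming from roots of $Q^k_r$. First, I would observe that these extra eigenvalues contribute nothing to the support of the limiting distribution: $Q^k_r$ has degree $r$, so $\widehat{X^k_r}$ acquires only $r$ additional eigenvalues beyond those arising from the $P^k_s$, while the tree itself has $\Theta((k-1)^r)$ nodes. Thus in the normalized distribution $\mathcal{U}(NSpec(A(\widehat{X^k_n})))$ the $Q$-contribution has total mass $O(n/(k-1)^n) \to 0$, so these eigenvalues form a null set in the limit and do not create flat intervals in the limiting CDF.

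Next, by Lemma 11, each first-occurrence root $\lambda$ of $P^k_m$ has limiting proportion $(k-2)^2/((k-1)^m-1)$ as an eigenvalue of $\widehat{X^k_n}$. After applying the monotone normalization $\lambda \mapsto (\lambda+k)/(2k)$, the limiting CDF has a constant step of width $(k-2)^2/((k-1)^m-1)$ at the normalized position of $\lambda$; this yields the $\varepsilon$-term in the theorem statement.

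To locate the left endpoint of this flat interval, I would sum the widths of all intervals lying to its left. By Lemma 1(iii), the first-occurrence roots of $P^k_n$ are exactly $\{2\sqrt{k}\cos(\ell\pi/n) : 1 \le \ell < n, \, (\ell,n)=1\}$, and since cosine is strictly decreasing on $[0,\pi]$ and the normalization is increasing in $\lambda$, the normalized value of $2\sqrt{k}\cos(\ell\pi/n)$ lies to the left of that of $\lambda = 2\sqrt{k}\cos(a\pi/m)$ precisely when $\ell/n < a/m$, i.e.\ $\ell < an/m$. Summing the widths $(k-2)^2/((k-1)^n-1)$ over all such pairs gives the formula
$$\sum_{n=1}^\infty \frac{(k-2)^2}{(k-1)^n-1} \sum_{\substack{(\ell,n)=1 \\ \ell < an/m}} 1,$$
the right endpoint being obtained by adding the width itself, which accounts for the $\varepsilon = 1$ case.

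The main obstacle is handling the $Q$-polynomial roots rigorously: beyond the measure-zero argument above, one must verify that no $Q$-root produces an isolated jump discontinuity in the limiting CDF. This follows because for each fixed root $\mu$ of some $Q^k_r$, its multiplicity in $\widehat{X^k_n}$ is bounded uniformly in $n$ (it only gets contributions from the finitely many $Q^k_s$ having $\mu$ as a root), so the normalized mass at $\mu$ tends to zero and no positive-width flat region is introduced at this point. A secondary subtlety, flagged by the footnote to Lemma 11, is that a $Q$-root may coincide with a $P$-root, but in that case it is absorbed into the existing $P$-flat region and again introduces no new endpoint.
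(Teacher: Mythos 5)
Your proposal is correct and follows essentially the route the paper intends: the paper states this theorem without proof, implicitly deferring to the argument for Theorem 4 (interval widths from the multiplicity lemma, ordering of first-occurrence roots via $\cos(\ell\pi/n)$ monotonicity), which is exactly what you reproduce with Lemma 11 in place of the earlier multiplicity lemma. Your additional care in showing the $Q$-polynomial roots contribute vanishing mass matches the paper's informal remark at the start of its limiting-distribution discussion for $\widehat{X^k_r}$, and is a welcome tightening of that step.
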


\subsection{Eigenvalues on Infinite Trees} The results from earlier in this section can be used to give explicit constructions of eigenvectors for the Hecke operators on the infinite tree that are stable under congruence subgroups of $SL_2(\mathbb{Q}_p)$.

\begin{theorem}
Let $\lambda$ be an eigenvalue of $A(\widehat{X^{k}_n})$ with corresponding eigenvector $v$. Then, $v$ can be extended to a unique eigenvector of $A(X^{k}_\infty)$ stable under $\Gamma(n)$. 
\end{theorem}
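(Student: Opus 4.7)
\medskip

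\noindent\textit{Proof plan.} The approach is to exploit the orbit structure of $\Gamma(n)$ on the Bruhat--Tits tree $X^k_\infty$ and then reduce the extension problem to a two-term linear recurrence on each infinite subtree hanging off a leaf of $\widehat{X^k_n}$.

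First I would invoke (or briefly re-derive from \cite{buildings}) the standard fact that, with $\widehat{X^k_n}$ identified with the ball $B_n$ of radius $n$ around the base vertex, $\Gamma(n)$ fixes $B_n$ pointwise, and for each leaf $\ell$ of $\widehat{X^k_n}$ the $\Gamma(n)$-orbits on the infinite subtree $T_\ell$ of $X^k_\infty$ rooted at $\ell$ are precisely the depth spheres $\{u \in T_\ell : d(u,\ell) = d\}$ for $d \geq 0$. Consequently, a function on $X^k_\infty$ is $\Gamma(n)$-invariant if and only if its restriction to each $T_\ell$ is isotropic, i.e., of the form $u \mapsto w^\ell_{d(u,\ell)}$ for some sequence $(w^\ell_d)_{d \geq 0}$. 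This alone tells us that any $\Gamma(n)$-invariant extension of $v$ is determined by choosing, for each leaf $\ell$, one such sequence with $w^\ell_0 = v_\ell$.

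Next I would impose the eigenvector equation and show these sequences are uniquely forced. At interior vertices of $\widehat{X^k_n}$ the relation $(Aw)_u = \lambda w_u$ already holds, since such vertices have the same $k$ neighbours in $\widehat{X^k_n}$ and in $X^k_\infty$. At a leaf $\ell$, the $X^k_\infty$-equation reads $v_{\mathrm{par}(\ell)} + (k-1)\, w^\ell_1 = \lambda v_\ell$; but the $\widehat{X^k_n}$-eigenrelation at $\ell$ (which has degree $1$ there) already gives $v_{\mathrm{par}(\ell)} = \lambda v_\ell$, so $w^\ell_1 = 0$ is forced. For each $d \geq 1$, isotropy collapses the eigenvector equation at a depth-$d$ descendant of $\ell$ to
\begin{equation*}
(k-1)\, w^\ell_{d+1} \;=\; \lambda\, w^\ell_d - w^\ell_{d-1},
\end{equation*}
a second-order linear recurrence which, together with the initial data $w^\ell_0 = v_\ell$ and $w^\ell_1 = 0$, uniquely determines $w^\ell_d$ for every $d \geq 0$ (and in fact admits a closed form in terms of the Fibonacci polynomials $P^k_d(\lambda)$ of Section 2, echoing the finite-tree constructions of Lemmas 1--4).

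Existence and uniqueness then follow by assembly: the sequences produced by the recurrence define a $\Gamma(n)$-invariant function $w$ on $X^k_\infty$ which by construction satisfies $Aw = \lambda w$ at every vertex, and conversely any $\Gamma(n)$-invariant eigenvector extending $v$ must be isotropic on each $T_\ell$ with isotropic sequence solving the same recurrence with the same initial data, and hence must coincide with the constructed $w$. The main obstacle in this plan is the very first step: the $\Gamma(n)$-orbit description depends on the algebraic--geometric dictionary between congruence subgroups of $SL_2(\mathbb{Q}_p)$ and the Bruhat--Tits tree, which is not developed elsewhere in the paper and therefore demands a careful citation or brief self-contained lemma. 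The recurrence analysis, by contrast, is entirely parallel to the Fibonacci-polynomial calculations already worked out in Section 2 and should be routine.
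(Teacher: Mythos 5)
Your proposal follows essentially the same route as the paper's proof: use $\Gamma(n)$-stability to force the extension to be isotropic on each subtree hanging off a leaf of $\widehat{X^k_n}$, observe that the leaf's degree-$1$ eigenrelation in the finite tree forces the value $0$ at depth $n+1$, and then propagate via the second-order recurrence $(k-1)w_{d+1} = \lambda w_d - w_{d-1}$, which determines the extension uniquely. Your version is somewhat more careful than the paper's (which asserts the sibling-identification consequence of stability without elaboration), but there is no substantive difference in approach.
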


\begin{proof}
We construct the associated Hecke eigenvector, $w$, directly by computing the values on the nodes at each distance from the root. We begin by assigning the values from $v$ to $w$ for the corresponding nodes in the infinite tree  $A(X^{k}_\infty)$. As $v$ already satisfies the eigenvalue equation for  $A(\widehat{X^{k}_n})$ we must assign the value $0$ to all nodes at depth $n+1$ in the infinite tree.

 Continuing on, the stability condition enforces that all nodes with the same immediate parent must be assigned to the same value. Thus, for each each leaf of $\widehat{X^{k}_{n}}$ the values assigned to the nodes below the corresponding node at depth $n$ in the infinite tree
 must satisfy $w_{n+i}=\dfrac{\lambda w_{n+(i-1)}-w_{n+(i-2)}}{k-1}$ with initial conditions given by the value of the leaf in $v$ and $0$. This recurrence completely determines $w$ and we have constructed the desired Hecke eigenvector. 

 \end{proof}

\section{Periodic Branching}

We now turn to trees with non--constant branching behavior.
We will see that the spectral behavior of these trees with periodic branching is quite similar to the homogeneous trees discussed earlier. In order to consider sequences of trees that are spectrally stable, we derive the adjacency spectra for trees with complete periods of branching, (e.g., $(3,2,3,2,3,2)$ so that the $3,2$ motif is not interrupted).

We will carefully derive the construction for 2--periodic trees (like one with $(3,2,3,2,3,2)$ branching) and then show how the method can be generalized to longer periods. 

\subsection{$(\alpha,\beta)$ trees}
In this section we construct the eigenvalues and eigenvectors of trees with branching pattern $(\alpha,\beta,\alpha,\beta,\ldots)$ of finite length. As in the case of homogeneous trees we will construct the eigenvalues as roots of a sequence of polynomials. 

We proceed as in Section 2 defining two families of polynomials whose roots are the eigenvalues of the trees. However, the branching process makes determining the relationships between the polynomials at each layer more complex. In order to derive a relation for the polynomials we define a system of recurrences: 

\begin{align*}
A_n=&xB_{n-1}-\alpha A_{n-1}\\
B_n=&xA_n-\beta B_{n-1}
\end{align*}
where $A_n$ represents the nodes with $\beta$ children and the $B_n$ represent nodes with $\alpha$ children. In order to derive a recurrence relation for the $A_n$ and the $B_n$ separately we use the determinant operator method of \cite{CombR} Theorem 7.20. Letting $E$ denote the sequence shift operator, our system can be summarized: 
$$\begin{bmatrix}
E+\alpha&-x\\-Ex&E+\beta
\end{bmatrix}. $$

The determinant of this matrix is $E^2-(x^2-(\alpha+\beta))E+\alpha\beta$ so the sequences individually satisfy the recurrence: $A_n=(x^2-(\alpha+\beta))A_{n-1}- \alpha\beta  A_{n-2}$ and this determines the recurrence that defines our polynomials $P^{(\alpha,\beta)}_n$. Thus, we define 
 $P^{(\alpha,\beta)}_n=(x^2-(\alpha+\beta))P^{(\alpha,\beta)}_{n-2}- \alpha\beta  P^{(\alpha,\beta)}_{n-4}$ with initial conditions $P^{(\alpha,\beta)}_0=0$, $P^{(\alpha,\beta)}_1=1$, $P^{(\alpha,\beta)}_2=x$, and $P^{(\alpha,\beta)}_3=x^2-\beta$ and define $Q^{(\alpha,\beta)}_n=xP^{(\alpha,\beta)}_n-\alpha P^{(\alpha\beta)}_{n-1}$. Note that applied to the tree this becomes a fourth order recurrence that only involves even indexed terms as the $A_n$ and $B_n$ sequences are independent.

 \begin{theorem}
$A(X^{(\alpha,\beta)}_{2n})$  has $\lambda$ as an eigenvalue if and only if $\lambda$ is a root of $P^{(\alpha,\beta)}_m$ for $m\leq 2n$ or a root of $Q^{(\alpha,\beta)}_{2n}$.
\end{theorem}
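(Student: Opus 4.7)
The plan is to mirror the strategy of Section~2: produce an explicit eigenbasis in which each eigenvector is labelled by a root of one of the polynomials $P^{(\alpha,\beta)}_m$ (with $m\le 2n$) or $Q^{(\alpha,\beta)}_{2n}$, and then check by a dimension count that we have exhausted $L^2(X^{(\alpha,\beta)}_{2n})$. The forward direction (every listed root is an eigenvalue) is done by construction; the reverse direction (no other eigenvalues exist) is forced by the dimension bookkeeping.

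For the construction, first handle the roots of $Q^{(\alpha,\beta)}_{2n}$ by an isotropic construction analogous to Lemma~2. Given $\lambda$ with $Q^{(\alpha,\beta)}_{2n}(\lambda)=0$, define $v$ to be constant on each depth level, with $v_m := P^{(\alpha,\beta)}_{2n+1-m}(\lambda)$. At an interior node of depth $m$ whose children have branching factor $\alpha$ (resp.\ $\beta$), the eigenvalue relation $v_{m-1}+\alpha v_{m+1}=\lambda v_m$ (resp.\ with $\beta$) reduces, after re-indexing, to exactly the one-step recurrence $A_n=xB_{n-1}-\alpha A_{n-1}$ (resp.\ $B_n=xA_n-\beta B_{n-1}$). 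The leaf relation $v_{2n-1}=\lambda v_{2n}$ is the initial condition $P^{(\alpha,\beta)}_2 = xP^{(\alpha,\beta)}_1$, and the root relation $\alpha v_1=\lambda v_0$ is the vanishing of $Q^{(\alpha,\beta)}$ at $\lambda$. Because the recurrence for $P^{(\alpha,\beta)}_n$ already packages one $\alpha$-step and one $\beta$-step together, the verification only needs to be checked on pairs of consecutive levels, at the leaves, and at the root.

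Second, handle the roots of $P^{(\alpha,\beta)}_m$ for $m<2n$ by the ``vanishing near the root'' technique of Lemmas~3 and~4. For a root $\lambda$ of $P^{(\alpha,\beta)}_m$, set $v$ to be zero on all nodes at depth $\le 2n-m$. At each node $j$ at depth $2n-m+1$, choose a tuple of scalars $(a^{j}_1,\ldots,a^{j}_{c_j})$, where $c_j$ is the branching factor at that depth, subject to $\sum_\ell a^{j}_\ell=0$, and propagate the values $a^{j}_i P^{(\alpha,\beta)}_{m-d}(\lambda)$ down into each subtree (with depth $d$ measured from node $j$). The interior and leaf relations go through by the same algebra as in the $Q$-case, using the $P^{(\alpha,\beta)}$ recurrence; the relation at the parent of the $j$'s is automatic because the relevant child-sum vanishes; and the relation at the top $j$-nodes uses $P^{(\alpha,\beta)}_m(\lambda)=0$, which plays exactly the role that $Q$ played before. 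This produces an eigenspace whose dimension equals the number of nodes at depth $2n-m+1$ minus the number at depth $2n-m$, just as in Lemma~4.

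Finally, add up the multiplicities. For each $m\le 2n$, the root of $P^{(\alpha,\beta)}_m$ has degree growing linearly in $m$ and the eigenspace has dimension comparable to the number of internal nodes at the corresponding level; summing over $m$ and adding the $\deg Q^{(\alpha,\beta)}_{2n}+1$ eigenvectors from the isotropic family should telescope to the geometric-like sum $|X^{(\alpha,\beta)}_{2n}|$, exactly as in the proof of Theorem~1. The main obstacle I anticipate is purely bookkeeping: with two interlocking recurrences, one must be careful that the index parity at each level matches the branching factor above and below, so that the single-line verification $v_{m-1}+c\,v_{m+1}=\lambda v_m$ invokes the correct one of the two recurrences and that the telescoping sum lines up with $|X^{(\alpha,\beta)}_{2n}|$; once the parity conventions are fixed the argument is a direct port of Section~2.
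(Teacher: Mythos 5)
Your proposal follows essentially the same route as the paper's proof: an explicit isotropic eigenvector construction for the roots of $Q^{(\alpha,\beta)}_{2n}$, the vanishing-near-the-root construction (as in Lemmas 3 and 4) for roots of $P^{(\alpha,\beta)}_m$ with $m<2n$ while tracking which branching factor applies at each parity of level, and a telescoping multiplicity count against $|X^{(\alpha,\beta)}_{2n}|$. The only quibble is a bookkeeping slip: the eigenspace built from scalars at the nodes $j$ of depth $2n-m+1$ has dimension $\sum_j (c_j-1)$, which is the number of nodes at depth $2n-m+2$ minus the number at depth $2n-m+1$ (one level below what you wrote), but this does not affect the structure of the argument.
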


\begin{proof} 
We follow the outline Lemmas 2--5. As in Section 2, it is straightforward to construct eigenvectors of $X^{(\alpha,\beta)}_{2n}$ that are roots of  $P^{(\alpha,\beta)}_m$ for $m\leq 2n$ or a root of $Q^{(\alpha,\beta)}_{2n}$. The one subtlety to note is that since we are growing the tree two layers at a time there are two sets of roots for the $P^{(\alpha,\beta)}_{2n}$, those for $P^{(\alpha,\beta)}_{2n}$ that occur with non--zero values on the direct children of the root and those of $P^{(\alpha,\beta)}_{2n-1}$ that occur with zero values on the direct children of the root as well. 

In order to show that these are all of the eigenvalues it again suffices to show that we have constructed $|X^{(\alpha,\beta)}_{2n}|$ linearly independent eigenvectors.  Since the layers grow alternately by $\alpha$ and $\beta$ the total number of nodes is: 
$1+\sum_{i=0}^n \alpha^i\beta^{i-1}+\alpha^i\beta^i$.

 Again, since we are growing the tree two layers at a time, we define define $r^{(\alpha,\beta)}_n=(\alpha\beta)^{n-1}(\alpha-1)$ to count the multiplicities of the roots of even indexed polynomial. Similarly, the odd indexed polynomials occur $\alpha(\beta-1)$ times at first appearance and also $(\alpha\beta)$ additional times in each larger tree. Thus, we also define $s^{(\alpha,\beta)}_n=(\alpha\beta)^{n-1}(\beta-1)\alpha$. 
 
Counting the multiplicities with the $2n+1$ roots of $Q^{(\alpha,\beta)}_{2n}$,the $r^{(\alpha,\beta)}_n$, and $s^{(\alpha,\beta)}_n$ gives the following telescoping sum:

\begin{align*}
(2n+1)+\sum_{i=0}^{n-1} (2n-2i)r^{(\alpha\beta)}_{(2n-2*i)} +\sum_{i=0}^{n-1} ((2n-1)-2i)s^{(\alpha\beta)}_{((2n-1)-2i)}=\\
(2n+1)+\sum_{i=0}^{n-1} (2n-2i)(\alpha-1)(\alpha\beta)^i+\sum_{i=0}^{n-1}((2n-1)-2i)(\beta-1)\alpha^{i+1}\beta^i=\\
(2n+1)+\sum_{i=0}^{n-1} (2n-2i)(\alpha^{i+1}\beta^i-(\alpha\beta)^i)+\sum_{i=0}^{n-1}((2n-1)-2i)(\alpha^{i+1}\beta^{i+1}-\alpha^{i+1}\beta^i)=\\
1+\sum_{i=0}^n \alpha^i\beta^{i-1}+\alpha^i\beta^i 
\end{align*}
which completes the theorem. 

\end{proof}

\subsection{Longer Periods}
The analysis for 2--periodic trees can be extended to $\ell$-periodic trees of any period. The matrices in terms of the shift operator are almost circulant matrices with the first $\ell-2$ rows taking the form: 
$$  [\overset{j-1}{\overbrace{0,0,\ldots,0}}, E, -Ex, E\alpha_{j+2}]$$
and final two rows:
$$ [\alpha_1,0,\ldots,0,E, -x]$$
and
$$[-x,\alpha_2,0,0,\ldots,0,E].$$

 In these cases, the recurrence relation of the polynomials associated to the $X^{(\alpha_1,\alpha_2,\ldots,\alpha_{\ell-1},\alpha_\ell)}_{\ell n}$ are given by:
$$E^2-\left(\sum_{i=0}^{\lfloor \frac{\ell}2\rfloor} (-1)^{i}x^{\ell-2i}\sigma_{(\alpha_1,\ldots,\alpha_\ell)}(i+1)  \right)E+\prod_{z=1}^\ell \alpha_z$$
where $\sigma_{(\alpha_1,\ldots,\alpha_\ell)}(i)$ is the sum over all $i$--term products of the $\alpha_z$ with no consecutive indices $\pmod{\ell}$. For example, $$\sigma_{(\alpha_1,\ldots,\alpha_\ell)}(1)=\sum_{j=1}^\ell \alpha_j$$ and $$\sigma_{(\alpha_1,\ldots,\alpha_\ell)}(2)=  \sum_{i=1}^{\ell} \alpha_i\sum_{\substack{i<j\\ |i-j|>1 \!\!\!\pmod{\ell}}} \alpha_j.$$

With these formulas, constructing the polynomials and counting the eigenvalues proceeds as in the proof of Theorem 8.   

\subsection{Recurrence Branching}
Another interesting class of trees are those where the number of branches at each layer grows with $k$. These admit a particularly simple spectral structure in the limit, described in the following theorem. For example, we could consider the ``Fibonacci'' tree which has two branches from the root, followed by three, then five, continuing. In this case, due to the exponentially increasing proportion of leaves, the eigenvalue $0$ comes to dominate the distribution as recorded in the following result.  
\begin{theorem} Let $(\alpha_1,\alpha_2, \alpha_3, \ldots)$ be an increasing sequence.
Then, as $n\rightarrow \infty$ the proportion of eigenvalues of $A(X^{(\alpha_1,\alpha_2, \alpha_3, \ldots)}_n)$ that are equal to zero goes to 1. 
\end{theorem}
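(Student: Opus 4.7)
My plan is to exhibit a subspace of $\ker A$ consisting entirely of leaf-supported vectors whose dimension, divided by the total number of vertices, already tends to $1$; this is enough since the proportion is trivially at most $1$.

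The key construction is the following. Let $N_i := \prod_{j=1}^{i}\alpha_j$ with $N_0=1$, so that the depth-$i$ layer has $N_i$ nodes and the total number of vertices is $\sum_{i=0}^{n} N_i$. For each vertex $p$ at depth $n-1$, pick any assignment of complex numbers to the $\alpha_n$ leaf-children of $p$ that sum to zero, and extend by zero to all other vertices of the tree. The resulting $v$ lies in $\ker A$: at $p$ its non-zero neighbors are exactly its children and by construction these sum to zero; at each leaf below $p$ the only neighbor is $p$, where $v$ vanishes; at every other vertex the neighborhood is disjoint from the support. For a fixed $p$ this gives an $(\alpha_n-1)$-dimensional space, and as $p$ ranges over the $N_{n-1}$ depth-$(n-1)$ nodes the leaf-supports are pairwise disjoint, so summing produces $(\alpha_n-1)N_{n-1}=N_n-N_{n-1}$ linearly independent zero-eigenvectors.

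It then remains to estimate the ratio
$$\frac{N_n - N_{n-1}}{\sum_{i=0}^{n} N_i} \;=\; \frac{1 - 1/\alpha_n}{r_n}, \qquad r_n := \sum_{k=0}^{n}\prod_{j=n-k+1}^{n}\alpha_j^{-1}.$$
Since $(\alpha_j)$ is strictly increasing in the positive integers we have $\alpha_j\to\infty$, and $\alpha_j\geq 2$ for all but finitely many $j$. The $k=0$ term of $r_n$ is $1$, while every subsequent term is bounded by $\alpha_n^{-1}\cdot 2^{-(k-1)}$, giving $r_n - 1 \leq 2/\alpha_n \to 0$. Combined with $1/\alpha_n\to 0$, the displayed ratio tends to $1$, proving the theorem.

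The main point to be careful about is not really an obstacle but an interpretive subtlety: the statement needs $\alpha_n\to\infty$, and it is only the strictness of the monotonicity (together with integrality) that forces this. For a bounded non-decreasing sequence the conclusion already fails, as the homogeneous case treated in Section 2 shows: the spectrum there accumulates on a whole interval and the proportion at $0$ stays fixed at $(k-1)^2/(k^2-1)=(k-1)/(k+1)<1$. Once $\alpha_n\to\infty$ is available, no deeper eigenvector construction is needed, because the leaf-only kernel constructed above already accounts for asymptotically all of the spectrum.
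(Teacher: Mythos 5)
Your proof is correct and takes essentially the same route as the paper: the identical $(\alpha_n-1)\prod_{j=1}^{n-1}\alpha_j$-dimensional space of leaf-supported kernel vectors as the lower bound, followed by the same ratio-of-counts computation. If anything, your geometric-series tail estimate $r_n-1\le C/\alpha_n$ is sharper than the paper's final bound of the form $1+\tfrac{k}{a_k}+\tfrac{k-1}{a_ka_{k-1}}$, which for a merely strictly increasing integer sequence such as $a_k=k$ does not visibly tend to $1$, so your version actually closes a small gap in the paper's last step.
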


\begin{proof}
Setting $\alpha_0=1$, the number of nodes (and equivalently the dimension of the full eigenspace for $A(X^{(\alpha_1,\alpha_2, \alpha_3, \ldots)}_n)$ is equal to $\sum_{i=0}^k \prod_{j=0}^i \alpha_j$. To each set of $\alpha_n$ leaves attached to the same vertex there is a corresponding $(\alpha_n-1)$ dimensional zero--eigenspace giving a lower bound on the total dimension of the zero--eigenspace of $(\alpha_n-1)\prod_{j=0}^{n-1}\alpha_j$.  Thus, it suffices to show that:

$$ \lim_{k\rightarrow \infty} \dfrac{\displaystyle (a_k-1)\prod_{i=0}^{k-1} a_i}{\displaystyle\sum_{i=0}^k\prod_{j=0}^{i} a_j} = 1. $$

We have

$$1>\dfrac{\displaystyle (a_k-1)\prod_{i=0}^{k-1} a_i}{\displaystyle\sum_{{ i}=0}^k\prod_{j=0}^{{ i}} a_j} > \dfrac{\displaystyle (a_k)\prod_{i=0}^{k-1} a_i}{\displaystyle\sum_{{ i}=0}^k\prod_{j=0}^{{ i}} a_j}=\dfrac{\displaystyle\prod_{i=0}^{k} a_i}{\displaystyle\sum_{{ i}=0}^k\prod_{j=0}^{{ i}} a_j}.$$

So it suffices to show that:

$$\lim_{k\rightarrow\infty}\dfrac{\displaystyle\prod_{i=0}^{k} a_i}{\displaystyle\sum_{{ i}=0}^k\prod_{j=0}^{{ i}} a_j}=1$$

or equivalently that:
$$\lim_{k\rightarrow\infty}\dfrac{\displaystyle\sum_{{ i}=0}^k\prod_{j=0}^{{ i}} a_j}{\displaystyle\prod_{i=0}^{k} a_i}=1.$$

Pulling out the leading term, we get: 
\begin{align*}1+\dfrac{\displaystyle\sum_{{ i}=0}^{k-1}\prod_{j=0}^{{ i}} a_j}{\displaystyle\prod_{\ell=0}^{k} a_\ell} =&
1+
\left(\dfrac{1}{a_k}+\dfrac{1}{a_ka_{k-1}}+\cdots+\dfrac{1}{a_ka_{k-1}\cdots a_1} \right)\\
<&1+\dfrac{1}{a_k} +
\left(\dfrac{1}{a_ka_{k-1}}+\dfrac{1}{a_ka_{k-1}}+\cdots+\dfrac{1}{a_ka_{k-1}} \right)\\
=& 1+ \dfrac{k}{a_k}+\dfrac{k-1}{a_k a_{k-1}}.
\end{align*}

Since the $a_k$ are strictly increasing, taking the limit as $k\rightarrow \infty$ gives the result.

\end{proof}

\section{Generalizations and Future Work}

\subsection{Other operators} In addition to the adjacency operator analyzed above, another standard operator on $L^2(X^{k}_n)$ is the {\em graph Laplacian} which  acts on functions on the vertices by $(Lv)_j=\sum_{i\sim j} v(j)-v(i)$. This action can be viewed as a discretization of the continuous Laplacian used for studying heat diffusion. As with the adjacency operator the spectrum of the Laplacian is intrinsically connected to the geometry of a graph.  Additionally, associated to any graph we can form a stochastic matrix that describes the transition probabilities of a simple random walk on the nodes of the graph, where at each time step the walker moves uniformly selects one of the edges adjacent to their current position and travels to the other vertex of the edge.

The machinery developed in Sections 2 and 3 carries forward directly to both of these families of operators. As examples, we state versions of Theorem 5 for these operators on $\xrk$.

\begin{theorem}
Let $\widehat{P^{k}_{n+1}}=(k-x)\widehat{P^{k}_{n}}-(k-1)\widehat{P^{k}_{n-1}}$ with initial conditions $\widehat{P^{k}_{1}}=1$ and $\widehat{P^{k}_{2}}=1-x$ and $\widehat{Q^{k}_n}=(x-k)\widehat{P^{k}_n}+k\widehat{P^{k}_{n-1}} $. Then, $\lambda$ is an eigenvalue for the Laplacian on $\xrk$ if and only if $\lambda$ is a root of  $\widehat{P^{k}_{s}}$ for some $s<k$ or a root of $\widehat{Q^{k}_n}$. 
\end{theorem}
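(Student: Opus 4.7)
The plan is to mirror the argument of Lemmas 2--5 from Section 2, substituting the Laplacian $L = D - A$ for the adjacency operator and keeping track of the fact that on $\xrk$ the degree function $D$ takes three distinct values: $k$ at the root, $k+1$ at interior nodes, and $1$ at leaves. Writing the eigenvalue equation $Lv = \lambda v$ node-by-node and specializing first to isotropic $v$ with $v_m = \widehat{P^{k}_{r-m+1}}(\lambda)$, the leaf equation pins down the initial conditions $\widehat{P^{k}_{1}} = 1$ and $\widehat{P^{k}_{2}} = 1 - x$; the interior equation becomes a three-term recurrence that \emph{defines} $\widehat{P^{k}_n}$; and the root equation reduces to exactly the condition $\widehat{Q^{k}_{r+1}}(\lambda) = 0$, which motivates the auxiliary polynomial $\widehat{Q^{k}_n}$.

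Having fixed these definitions, the forward direction splits into two parallel constructions. For each root $\lambda$ of $\widehat{Q^{k}_{r+1}}$, the isotropic vector above is an eigenvector, contributing $\deg(\widehat{Q^{k}_{r+1}})$ linearly independent eigenvectors directly. For each root $\lambda$ of $\widehat{P^{k}_s}$ with $1 \leq s \leq r+1$, I would imitate Lemma 4: set every coefficient at depth $\leq r-s+1$ equal to zero, then at each of the $k^{r-s}$ nodes at depth $r-s+1$ choose weights $a^j_1, \ldots, a^j_k$ summing to zero and propagate the value $a^j_i \widehat{P^{k}_{s-m}}(\lambda)$ down the $i$-th subtree. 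The eigenvalue equation holds trivially in the zero region, holds at the transition layer exactly because $\widehat{P^{k}_s}(\lambda) = 0$ and $\sum_i a^j_i = 0$, and holds below the transition by the interior recurrence together with the leaf-matching initial conditions; this produces a $k^{r-s}(k-1)$-dimensional eigenspace of type $(\lambda, s)$.

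For completeness, a telescoping dimension count parallel to the proof of Theorem 5 should show that $\sum_{s=1}^{r+1} k^{r-s}(k-1)(s-1)$ plus the $\deg(\widehat{Q^{k}_{r+1}})$ eigenvectors from the $\widehat{Q}$ roots adds up to $|\xrk| = (k^{r+1}-1)/(k-1)$. Linear independence across different values of $s$ follows from the disjoint depth-supports of the eigenvectors, and within a fixed $s$ from the $(k-1)$-dimensional choice of weights at each transition-layer node. The main obstacle I anticipate is twofold. First, one must verify the eigenvalue equation at three genuinely different boundaries (root, transition layer, and leaves), since each carries a different degree contribution from $D$; in contrast to the adjacency case of Section 2, the leaf contribution no longer folds into a single clean three-term recurrence, and the coupling between the $\widehat{P}$ and $\widehat{Q}$ families is essential. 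Second, one must check that $\widehat{P^{k}_s}$ and $\widehat{Q^{k}_{r+1}}$ cannot share a root in a way that would collapse dimensions; an argument analogous to Lemma 1(1) for the hatted polynomials, or an ad hoc check using the explicit recurrences and the isotropic-versus-non-isotropic distinction, should settle this.
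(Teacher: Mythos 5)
Your overall plan (redo Lemmas 2--5 with $L=D-A$ in place of $A$) is the natural one, and in fact the paper gives no proof of this theorem at all --- it is stated as an instance of ``the machinery carries forward'' --- so there is no alternative route to compare against. But there is a concrete gap at the first step of your outline: the interior-node equation does \emph{not} produce the recurrence in the statement if you work on $\xrk$ with the degree profile you describe ($k$ at the root, $k+1$ at interior nodes, $1$ at leaves). At an interior node of $\xrk$ at depth $m$, with one parent and $k$ children, $(Lv)_m=\lambda v_m$ reads
$$(k+1)v_m - v_{m-1} - k\,v_{m+1} = \lambda v_m,$$
so with $v_m=\widehat{P^{k}_{r-m+1}}(\lambda)$ the three-term recurrence you are forced into is $\widehat{P^{k}_{n+1}}=(k+1-x)\widehat{P^{k}_{n}}-k\,\widehat{P^{k}_{n-1}}$, not the stated $\widehat{P^{k}_{n+1}}=(k-x)\widehat{P^{k}_{n}}-(k-1)\widehat{P^{k}_{n-1}}$. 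The stated coefficients are exactly what falls out for an interior node of degree $k$ with $k-1$ children, i.e.\ for the tree $\widehat{\xrk}$ of Section 3; the leaf condition $\widehat{P^{k}_{2}}=1-x$ and the root condition $\widehat{Q^{k}_{r+1}}(\lambda)=0$ happen to be identical for both trees, which is why those parts of your derivation appear to check out. So either the theorem's ``$\xrk$'' is a misprint for ``$\widehat{\xrk}$'' (the surrounding text calls these ``versions of Theorem 5,'' which is the $\widehat{\xrk}$ theorem, and the hats on the polynomials point the same way), or the stated recurrence is wrong for $\xrk$. Your proof cannot simply assert that ``the interior equation becomes a three-term recurrence that defines $\widehat{P^{k}_n}$'' without resolving this, because as written your derivation produces a different polynomial family than the one the theorem names.

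Once you commit to one tree, the rest of your outline needs the matching bookkeeping. If the tree is $\widehat{\xrk}$, the type-$(\lambda,s)$ eigenspaces have dimension $(k-2)k(k-1)^{r-s}$ (as in Lemma 9), not $k^{r-s}(k-1)$, and the completeness count is the one in Lemma 10 plus the $r+1$ roots of $\widehat{Q^{k}_{r+1}}$; your sum $\sum_{s=1}^{r+1}k^{r-s}(k-1)(s-1)$ also improperly folds the $s=r+1$ case (which contributes $k-1$ per root, as in Lemma 3) into a formula containing $k^{r-s}=k^{-1}$. Your worry about $\widehat{P^{k}_s}$ and $\widehat{Q^{k}_{r+1}}$ sharing a root is legitimate but harmless for completeness: the isotropic eigenvector attached to a $\widehat{Q}$-root is nonzero at the root while every type-$(\lambda,s)$ eigenvector vanishes there, so linear independence, and hence the dimension count, survives shared roots.
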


\begin{theorem}
Let $\widetilde{P^{k}_{n+1}}=(k+1)(x\widetilde{P^{k}_{n}}-(k)\widetilde{P^{k}_{n-1}})$ with initial conditions $\widetilde{P^{k}_{1}}=1$ and $\widetilde{P^{k}_{2}}=(k+1)x$ and $\widetilde{Q^{k}_n}=x\widetilde{P^{k}_n}-\widetilde{P^{k}_{n-1} }$. Then, $\lambda$ is an eigenvalue for the random walk matrix on $\xrk$ if and only if $\lambda$ is a root of  $\widetilde{P^{k}_{s}}$ for some $s<k$ or a root of $\widetilde{Q^{k}_n}$. 
\end{theorem}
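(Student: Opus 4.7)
The plan is to adapt the constructions of Section 2 to the random walk matrix on $\xrk$, exploiting the same isotropy and subtree--support structure that produced Theorem 1. The random walk matrix commutes with the symmetries permuting siblings at any node, so it preserves the isotropic subspace, and at every vertex its eigenvalue equation is a short linear relation between $v$ at that vertex and at its neighbors. The difference from the adjacency case is that the coefficients now depend on the vertex's degree, so the equations at the root (degree $k$), at interior nodes (degree $k+1$), and at the leaves (degree $1$) take three different shapes.

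First I would construct the isotropic eigenvectors via the ansatz $v_m = \widetilde{P^{k}_{r-m+1}}(\lambda)$. The interior equations collapse to a three--term recurrence in $m$, and comparing with the recurrence defining $\widetilde{P^{k}_{n}}$ shows that the ansatz satisfies every interior equation automatically. The leaf equation at $m=r$ pins down the initial conditions $\widetilde{P^{k}_{1}}=1$ and $\widetilde{P^{k}_{2}}=(k+1)x$ prescribed in the statement. The only remaining equation is at the root, and a single application of the recurrence rewrites it as $\widetilde{Q^{k}_{r}}(\lambda)=0$; the definition $\widetilde{Q^{k}_{n}}=x\widetilde{P^{k}_{n}}-\widetilde{P^{k}_{n-1}}$ is precisely the polynomial that encodes this residual root constraint, producing one eigenvector for each root of $\widetilde{Q^{k}_{r}}$.

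Next, for each $s$ with $2 \leq s \leq r$ and each root $\lambda$ of $\widetilde{P^{k}_{s}}$, I would construct a $k^{r-s}(k-1)$--dimensional eigenspace following the template of Lemma 4: set to zero all coefficients above depth $r-s+1$, pick sibling weights $\{a^{j}_{1},\dots,a^{j}_{k}\}$ with $\sum_{i} a^{j}_{i} = 0$ at each of the $k^{r-s}$ nodes at depth $r-s+1$, and extend into the subtrees below by $a^{j}_{i}\,\widetilde{P^{k}_{s-m}}(\lambda)$. The same recurrence argument as in Lemmas 2--4 shows that the interior, leaf, and vanishing--parent equations all hold, while the eigenvalue equation at each subtree root reduces exactly to $\widetilde{P^{k}_{s}}(\lambda) = 0$. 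A telescoping count identical to the one at the end of the proof of Theorem 5 then confirms that together these constructions produce $(k^{r+1}-1)/(k-1) = |\xrk|$ linearly independent eigenvectors, exhausting $L^{2}(\xrk)$.

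The main obstacle I anticipate is the root computation in the isotropic step. In the adjacency case the root equation happens to be a shifted instance of the defining recurrence evaluated at $P^{k}_{r+2}$, but here the asymmetry between the degree $k$ of the root and the degree $k+1$ of the interior breaks that coincidence, so a short but delicate algebraic reduction is required to recognize the residual root condition as exactly the vanishing of $\widetilde{Q^{k}_{r}}$. Once that identification is in hand, the rest of the proof transcribes Lemmas 2--4 and Theorem 5 essentially verbatim.
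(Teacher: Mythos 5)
First, context: the paper offers no proof of this theorem. Section 5.1 states it (and its Laplacian companion) with only the remark that the machinery of Sections 2 and 3 ``carries forward directly,'' so your overall strategy --- an isotropic ansatz producing the $\widetilde{Q}$ roots, Lemma 4--style subtree constructions producing the $\widetilde{P}_s$ roots, and a telescoping dimension count --- is exactly the intended route, and there is no written argument to compare it against.

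The gap is in the step you declare automatic. You assert that the leaf equation ``pins down'' $\widetilde{P^{k}_{2}}=(k+1)x$ and that the interior equations match the stated recurrence $\widetilde{P^{k}_{n+1}}=(k+1)(x\widetilde{P^{k}_{n}}-k\widetilde{P^{k}_{n-1}})$. Carrying the computation out for the walk matrix $W=D^{-1}A$, i.e.\ $(Wv)_\gamma=\deg(\gamma)^{-1}\sum_{\beta\sim\gamma}v_\beta$, the leaf equation is $v_{r-1}=\lambda v_r$, which forces $\widetilde{P^{k}_{2}}=x$, and the interior equation $v_{m-1}+kv_{m+1}=(k+1)\lambda v_m$ forces $\widetilde{P^{k}_{n+1}}=(k+1)x\widetilde{P^{k}_{n}}-k\widetilde{P^{k}_{n-1}}$; the factor $(k+1)$ does not multiply the $\widetilde{P^{k}_{n-1}}$ term. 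These two polynomial families are not rescalings of one another and have different roots from degree $3$ onward, and no depth-dependent rescaling $v_m=d_m\widetilde{P^{k}_{r-m+1}}(\lambda)$ reconciles them once $r\geq 3$, because the leaf equation and consecutive interior equations then impose incompatible ratios $d_{m+1}/d_m$. Concretely, for $k=2$, $r=3$ the antisymmetric eigenvectors supported on the two subtrees below the root have eigenvalues given by the roots of $9x^3-8x$, namely $0,\pm 2\sqrt{2}/3$, whereas the stated recurrence yields $\widetilde{P^{2}_{4}}=27x^3-36x$ with roots $0,\pm 2/\sqrt{3}$, and $2/\sqrt{3}>1$ cannot be an eigenvalue of a stochastic matrix at all. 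So the theorem as printed is not correct (most plausibly a transcription error in the recurrence, compounded by the garbled index ranges ``$s<k$'' and ``$\widetilde{Q^{k}_{n}}$''), and the place where your proof breaks is precisely the verification you wave through. A proof along your lines must first repair the polynomials --- e.g.\ $\widetilde{P^{k}_{1}}=1$, $\widetilde{P^{k}_{2}}=x$, $\widetilde{P^{k}_{n+1}}=(k+1)x\widetilde{P^{k}_{n}}-k\widetilde{P^{k}_{n-1}}$, with spectrum the roots of $\widetilde{P^{k}_{s}}$ for $2\leq s\leq r+1$ together with those of $\widetilde{Q^{k}_{r+1}}=x\widetilde{P^{k}_{r+1}}-\widetilde{P^{k}_{r}}$ --- after which your outline does go through essentially verbatim.
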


\subsection{Higher order simplices}
Having  characterized the spectrum of the $k$--ary regular trees we now turn to higher order objects. Both hypergraphs and simplicial complexes can be viewed as generalizations of graphs that encode   more than binary relations between objects and operators, and operators like the adjacency and Laplacian examples considered in the previous section also exist for these objects \cite{FC}. 

Recently there has been significant interest in constructing higher order expanders, generalizing the definition of expansion from the setting of graph to hypergraphs and other generalizations. The survey \cite{AL} summarizes the recent work in this area.

 Viewing a graph as a 1--simplex, we extract the relevant components to construct a 2--simplex version of our problem. To construct the trees   above we began with a single vertex (0--simplex) and then at each stage added new edges (1--simplicies) to each existing leaf vertex to make the resulting graph $k$-regular. Raising the dimension of each component leads us to consider a 2--simplex constructed by beginning with a single edge (1--simplex) and at each stage adding new faces (2--simplices) to make the resulting complex $k$--face regular. This construction can also be viewed as a $k$--regular hypergraph where each hyperedge contains $k$ vertices and each layer is constructed by adding a pair of new vertices/hyperedges for each pair of nodes at the previous step that occur in exactly 1 edge. Finally, this construction can also be viewed as the finite, rooted components of the building for $SL_m(\mathbb{Q}_p)$. We call the simplices derived by this construction {\it the $k$--regular rooted fan of dimension $m$}. 
 
There are two natural notions of adjacency between $k-1$ simplices in a fan, derived from the components of the combinatorial Laplacian \cite{FC}. To extend our earlier results to this setting we consider ``upper'' adjacency where two $(k-1)$-simplices are adjacent if and only if they bound the same $k$-face in the fan. Using this formulation we can construct the graph whose vertices are the $(k-1)$-simplices and where  edges are the defined by this ``upper'' adjacency and determine its spectrum using methods similar to those above. The first three binary fans and their corresponding upper adjacency graphs are shown in Figure \ref{fig:fans}. These upper adjacency graphs are formed by much like the trees above, with copies of the complete graph on $k+1$ nodes replacing the leaves. As there is still a symmetric relationship between the newly added vertices that belong to the same complete graph, we can extend the methods for constructing eigenpairs detailed above to this setting: 

\newpage 

\begin{figure}
    \centering
    \subfloat{\includegraphics[height=2in]{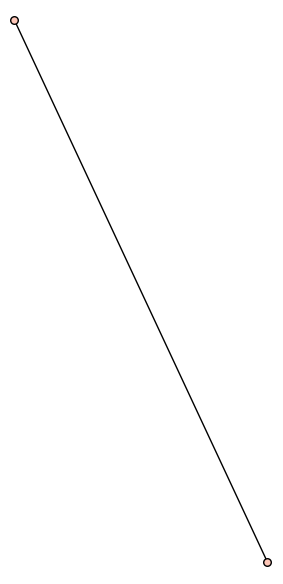}}\qquad\qquad\qquad
        \subfloat{\includegraphics[height=2in]{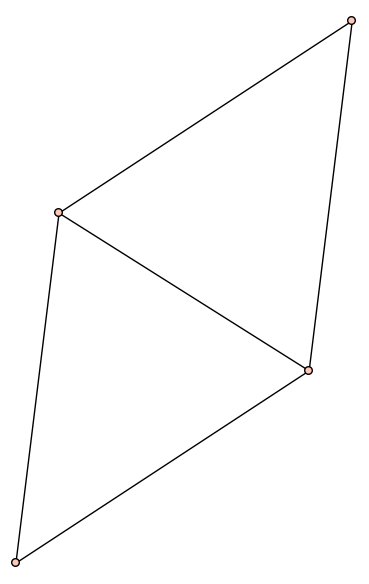}}\qquad\qquad
            \subfloat{\includegraphics[height=2in]{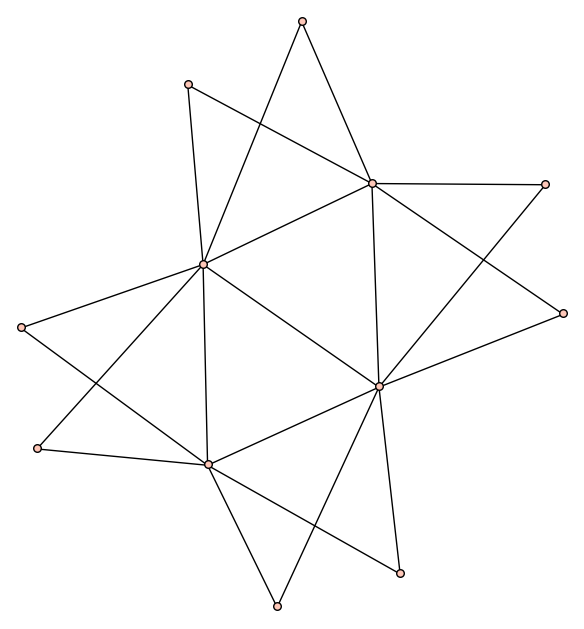}}\\
                \subfloat{\includegraphics[height=2in]{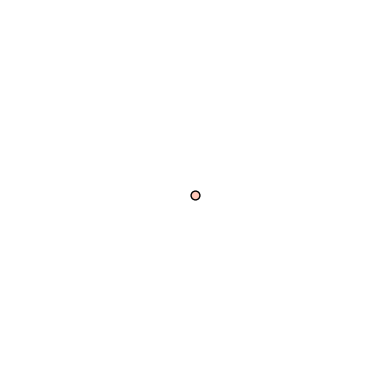}}\qquad
                \subfloat{\includegraphics[height=2in]{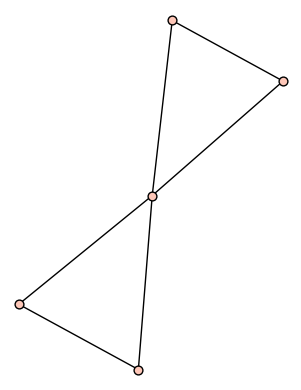}}\quad\quad
    \subfloat{\includegraphics[height=2in]{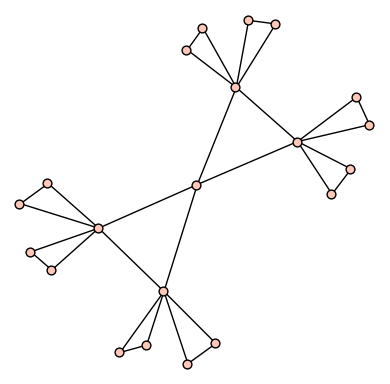}}\quad

    \caption{Rooted fans and upper adjacency graphs. The top row shows the first three binary rooted fans, where at each step two new triangular faces are attached to the edges added at the previous step. The bottom row shows the corresponding upper adjacency graphs for the edges. That is, each edge in the fan is represented by a vertex in the graph and two vertices are adjacent if the corresponding edges bound a common face. }
    \label{fig:fans}
\end{figure}

\begin{theorem} Let $F^{k,d}_{n+1}(x)=(x-(d-2))F^{k,d}_n(x)-k(d-1)F^{k,d}_{n-1}(x)$ with $F^{k,d}_0(x)=0$ and $F^{k,d}_1(x)=1$ and $G^{k,d}_n(x)= xF^{k,d}_n(x) - k(d-1)F_{n-1}^{k,d}(x) $.
Then, if $\lambda$ is a root of $G^{k,d}_n$ it is an eigenvalue of the upper adjacency operator of the $k$-ary,  rooted fan of dimension $d$ of depth $r$. 
\end{theorem}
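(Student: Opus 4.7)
The plan is to mimic the isotropic eigenvector construction from Lemma 2 (and its analog Lemma 8 for $\widehat{X_r^k}$), adapted to the clique structure of the upper adjacency graph. First I would unpack the local combinatorics: in the paper's convention a $d$-simplex has $d$ vertices, so each $d$-simplex in the fan contributes a clique $K_d$ on its $d$ codimension-one faces in the upper adjacency graph. The recursive $k$-ary construction places each non-leaf, non-root $(d-1)$-simplex at some depth $m$ inside one parent clique (contributing $1$ neighbor at depth $m-1$ and $d-2$ sibling neighbors at depth $m$) together with $k$ child cliques (each contributing $d-1$ new neighbors at depth $m+1$). The root lies in $k$ child cliques only, giving degree $k(d-1)$, while a leaf at depth $r$ lies only in its parent clique, giving degree $d-1$.

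Next I would define the candidate eigenvector $v$ to be isotropic with respect to the root, assigning the value $v_m = F^{k,d}_{r-m+1}(\lambda)$ to every $(d-1)$-simplex at depth $m$, and verify $Av = \lambda v$ in three cases. For an interior simplex the eigenequation reads
\[ v_{m-1} + (d-2) v_m + k(d-1) v_{m+1} = \lambda v_m, \]
which, after reindexing $n = r - m + 1$, is exactly the defining recurrence of $F^{k,d}_n$. For a leaf, the equation $v_{r-1} + (d-2) v_r = \lambda v_r$ reduces via $F^{k,d}_0 = 0$ and $F^{k,d}_1 = 1$ to the identity $F^{k,d}_2(\lambda) = \lambda - (d-2)$, which holds automatically from the recurrence. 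For the root, the equation $k(d-1) v_1 = \lambda v_0$ rearranges to
\[ \lambda F^{k,d}_{r+1}(\lambda) - k(d-1) F^{k,d}_r(\lambda) = 0, \]
which is exactly $G^{k,d}_{r+1}(\lambda) = 0$; so the hypothesis that $\lambda$ is a root of the appropriate $G^{k,d}$ closes the argument.

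The main obstacle is identifying the clique structure of the upper adjacency graph correctly. Unlike the tree case, where each edge joined a parent to a single child across layers, here each $d$-simplex produces a clique $K_d$ whose internal adjacencies contribute the $(d-2)v_m$ same-layer term in the eigenvalue equation. Once this term is cleanly extracted from the interior degree count $1 + (d-2) + k(d-1)$, the recurrence and initial conditions of $F^{k,d}_n$ are designed precisely so that the interior and leaf equations hold, and $G^{k,d}_n$ is designed precisely so that its roots encode the boundary condition at the root. A minor subtlety is the degenerate case $F^{k,d}_{r+1}(\lambda) = 0$: if this happens simultaneously with $G^{k,d}_{r+1}(\lambda)=0$ then by the recurrence the vector $v$ constructed above would vanish identically, but in that situation an eigenvector can still be produced by a Lemma 3--style construction that zeros out the root and places compatible nonzero values on symmetric sibling subtrees.
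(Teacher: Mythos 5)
Your proposal is correct and follows essentially the same route as the paper: the same isotropic assignment $v_m = F^{k,d}_{r-(m-1)}(\lambda)$, the same three-case verification (leaf, interior node with neighbor count $1+(d-2)+k(d-1)$, root), and the same identification of the root condition with the vanishing of $G^{k,d}_{r+1}$ at $\lambda$. The degenerate case you flag at the end is in fact vacuous: if $F^{k,d}_{r+1}(\lambda)=0$ together with $G^{k,d}_{r+1}(\lambda)=0$ then $F^{k,d}_{r}(\lambda)=0$ as well, and two consecutive vanishing terms propagate down the recurrence to force $F^{k,d}_{1}(\lambda)=0$, contradicting $F^{k,d}_{1}=1$; so the constructed vector always has $v_r=1$ and is never identically zero.
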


\begin{proof}
 The upper adjacency graphs of these fans can be constructed by beginning with a root node and at every step connecting $k$ copies of the complete graph on $d-1$ nodes to each node that was added at the previous step. These complete graphs are disjoint, except for their common ancestor. 
 
 As in Lemma 2 we construct an eigenvector for $\lambda$  explicitly by labelling each node at level $m$  with $F^{k,d}_{r-(m-1)}(\lambda)$. Then, we verify the eigenrelation in three cases: 
 
 \noindent {\bf Case 1:} $m=r$. In this case we are at the nodes added at the most recent step. These nodes are adjacent to their parent and $d-2$ other newly added nodes. Since $F^{k,d}_{1}(\lambda)=1$ and $F^{k,d}_{2}(\lambda)= (\lambda-(d-2))1-k(d-1)0 = \lambda-{d-2}$ the relation is satisfied. 
\medskip

\noindent {\bf Case 2:} $0 < m < r$. In this case we are at a node that is neither a leaf nor the root and  has $k(d-1)$ children, $(d-2)$ siblings, and one parent. This is exactly the condition imposed by the recurrence relation for $F^{k,d}$.

\medskip
\noindent {\bf Case 3:} $m=0$. In this case, we are at the root which has $k(d-1)$ children and
\begin{align*}
k(d-1) F^{k,d}_r(\lambda) =& \lambda F^{k,d}_r(\lambda)
 \end{align*}
since $\lambda$ is a root of  $G^{k,d}_n$.
 
\end{proof}

Notice that this result reduces to the case of regular trees discussed above with $d=2$ since $F^{k,2}_n = P^k_n$. We leave for future work the problem of computing the densities of these eigenvalues as well as  the graphs and eigenvalues associated to the full simplicial Laplacian.

\subsection{Future Work}
These results motivate several related questions that we will address in future work. One generalization of the construction process of Lemma 4 is the problem of finding subgraphs whose restrictions remain eigenvectors. That is, given a graph $G$ and an associated adjacency eigenpair $(v,\lambda)$ does there exist a subgraph $H$ of $G$ so that $(v\vert_H,\lambda')$ is an eigenpair for $H$? For $\lambda=0$ it is easy to construct examples using leaves or disconnected components and allowing $\lambda \neq \lambda'$ means that regular subgraphs of regular graphs have this property with respect to $v=\mathbf{1}$. Determining other families of examples and characterizing this behavior in terms of properties of $G$, $H$, and $(v,\lambda)$ remains open. 

Another interesting problem relates to the endpoints of the Cantor--like sets that occur in Theorems 4 and 6.  The connection with Lambert series and the nice result of Corollary 2 suggests that a more compact representation might exist, defined in terms of partial sums of the $\varphi$ function.   Finally, one way to abstract the  property that causes the spectral measures of $A(X_n^{k})$ to converge to a singular distribution as $n\rightarrow \infty$ can be characterized as follows. Let $G_1,G_2,\ldots$ be an infinite sequence of graphs with $|G_i|$ increasing. Then, for all $\varepsilon>0$ there must exist a finite set $\Lambda\subset \mathbb{R}$ and a $N\in\mathbb{N}$ such that for all $n>N$:
$$ \dfrac{|\{\lambda\in \operatorname{spec}(G_n): \lambda\notin \Lambda\}| }{|\operatorname{spec}(G_n)|} <\varepsilon.$$  Is it possible to give a combinatorial description of all graph sequences that satisfy this property?
 Note that this property is not sufficient to guarantee that the spectral measures converge. 
 For example, we can consider the sequence interleaving $\xnk$ with complete graphs of size $\dfrac{k^{r+1}}{k-1}$, which fails since all but one of the eigenvalues of $K_n$ are zero for any $n$. 

\section*{Ackowledgements} The first author acknowledges the generous support of the Prof. Amar G. Bose Research Grant.

\end{document}